\newlist{paraenum}{enumerate}{1}
\setlist[paraenum]{wide, label=(\arabic*)}
\newlist{inparaenum}{enumerate*}{1}
\setlist[inparaenum]{label=(\arabic*)}
\setlist[itemize]{leftmargin=4em,rightmargin=4.5em,label=---}
\setlist[enumerate]{leftmargin=4em,rightmargin=4.5em}
\setlist[description]{leftmargin=4em,labelindent=4em,rightmargin=4.5em}
\definecolor{darkred}{RGB}{160,0,0}
\definecolor{darkblue}{RGB}{0,0,160}
\newcommand{\kbldelim}{(}
\newcommand{\kbrdelim}{)}
\newcommand{\kbrowstyle}{\scriptstyle}
\newcommand{\kbcolstyle}{\scriptstyle}
\newlength{\kbcolsep}
\newlength{\kbrowsep}
\newif\ifkbalignright
\newlength{\br@kwd}
\newlength{\k@bordht}
\newcommand{\kbordermatrix}[1]{
\begingroup
	\setbox0=\hbox{$\left\kbldelim\right.$}
	\setlength{\br@kwd}{\wd0}
	\setbox\@arstrutbox\hbox{\vrule
		\@height\arraystretch\ht\strutbox
		\@depth\arraystretch\dp\strutbox
		\@width\z@}
	\setlength{\k@bordht}{\kbrowsep}
	\addtolength{\k@bordht}{\ht\@arstrutbox}
	\addtolength{\k@bordht}{\dp\@arstrutbox}
	\m@th
	\def\@kbrowstyle{\kbrowstyle}
	\setbox0=\vbox{
		\def\cr{\crcr\noalign{\kern\kbrowsep
			\global\let\cr=\endline
			\global\let\@kbrowstyle=\relax}}
		\let\\\@arraycr
		\lineskip\z@skip
		\baselineskip\z@skip
		\dimen0\kbcolsep \advance\dimen0\br@kwd
		\ialign{\tabskip\dimen0
			\kern\arraycolsep\hfil\@arstrut$\kbcolstyle ##$\hfil\kern\arraycolsep&
			\tabskip\z@skip
			\kern\arraycolsep\hfil$\@kbrowstyle ##$\ifkbalignright\relax\else\hfil\fi\kern\arraycolsep&&
			\kern\arraycolsep\hfil$\@kbrowstyle ##$\ifkbalignright\relax\else\hfil\fi\kern\arraycolsep\crcr
			#1\crcr}
	}
	\setbox2=\vbox{\unvcopy0 \global\setbox5=\lastbox}
	\loop
		\setbox2=\hbox{\unhbox5 \unskip \global\setbox3=\lastbox}
		\ifhbox3
			\global\setbox5=\box2
			\global\setbox1=\box3
	\repeat
	\setbox2=\hbox{$\kern\wd1\kern\kbcolsep\kern-\arraycolsep
		\left\kbldelim
		\kern-\wd1\kern-\kbcolsep\kern-\br@kwd
	\vcenter{\kern-\k@bordht\vbox{\unvbox0}}
	\right\kbrdelim$}
	\null\vbox{\kern\k@bordht\box2}
	\endgroup
}
\g@addto@macro\bfseries{\boldmath}
\newcommand{\lsem}{\llbracket}
\newcommand{\rsem}{\rrbracket}
\newcommand{\A}{\mathcal{A}}
\newcommand{\G}[1]{\mathcal{#1}}
\newcommand{\BB}[1]{\mathbb{#1}}
\newcommand{\Sym}{S}
\newcommand{\HypOct}{B}
\newcommand{\SL}{\mathrm{SL}}
\newcommand{\GF}{\mathrm{GF}}
\DeclareMathOperator{\sgn}{sgn}
\DeclareMathOperator{\Adj}{adj}
\newcommand{\CI}{\mathrel{\text{$\perp\mkern-10mu\perp$}}}
\newcommand{\CIS}[1]{\lsem#1\rsem}
\newcommand{\eps}{\varepsilon}
\newcommand{\comp}{\mathsf{c}}
\newcommand{\ol}[1]{\overline{#1}}
\theoremstyle{definition}
\newtheorem{theorem}{Theorem}[section]
\newtheorem{mainthm}[theorem]{Theorem}
\newtheorem{lemma}[theorem]{Lemma}
\newtheorem{proposition}[theorem]{Proposition}
\newtheorem{corollary}[theorem]{Corollary}
\newtheorem{remark}[theorem]{Remark}
\newtheorem*{fact*}{Fact}
\newtheorem{example}[theorem]{Example}
\newtheorem{definition}[theorem]{Definition}
\newtheorem{conjecture}[theorem]{Conjecture}
\newtheorem{question}[theorem]{Question}
\title[Gaussoids are two-antecedental Gaussians]%
{Gaussoids are two-antecedental approximations \\
of Gaussian conditional independence structures}
\author{Tobias Boege}
\address{Tobias Boege, Max-Planck Institute for Mathematics in the Sciences Leipzig, Germany}
\email{post@taboege.de}
\date{\today}
\subjclass[2020]{62B10, 62R01, 14P10}
\keywords{conditional independence, inference, completeness, gaussoid, realizability, rationality}
\begin{document}

\begin{abstract}
The gaussoid axioms are conditional independence inference rules which
characterize regular Gaussian CI structures over a three-element ground
set. It is known that no finite set of inference rules completely describes
regular Gaussian CI as the ground set grows. In this article we show that
the gaussoid axioms logically imply every inference rule of at most two
antecedents which is valid for regular Gaussians over any ground set.
The proof is accomplished by exhibiting for each inclusion-minimal gaussoid
extension of at most two CI statements a regular Gaussian realization.
Moreover we prove that all those gaussoids have rational positive-definite
realizations inside every $\eps$-ball around the identity matrix.
For the proof we introduce the concept of algebraic Gaussians over arbitrary
fields and of positive Gaussians over ordered fields and obtain the same
two-antecedental completeness of the gaussoid axioms for algebraic and
positive Gaussians over all fields of characteristic zero as a byproduct.
\end{abstract}

\maketitle

\section{Introduction}
\label{sec:Intro}

Conditional independence (CI) is a basic notion in the probabilistic
approach to reasoning under uncertainty. CI~constraints prescribe statements
of the form ``given that the value of the factor $C$ is known, the value of
$A$ is \emph{irrelevant} to the value of $B$'', or ``having observed $C$,
the outcome of $A$ gives no further information on the outcome of $B$'' on
a joint probability distribution of random variables $A, B, C, \dots$.
Such statements allow the incorporation of expert knowledge on independences
of the real-world phenomena into a statistical model.
The importance of conditional independence in statistical theory has been
described in a seminal paper by Dawid~\cite{DawidStatTheory}.
Pearl~\cite[Chapter~3]{PearlBook} further emphasizes the usefulness of CI
as a qualitative, as opposed to numeric, measure of independence in
artificial intelligence, in that reasoning on CI can be performed logically
instead of by summing over and dividing given probabilities.
The~advantage of using CI~inference in the processing and deduction of
further (in)dependence statements is that every conclusion which is drawn
from a given set of CI~assumptions is sound and that their derivation
happens in discrete steps, each of which is verifiable by humans based
on agreed-upon deduction rules.

The fundamental laws of conditional independence identified by Dawid
became the definition of \emph{semigraphoids}. The semigraphoid properties
are universal deduction rules which are valid for all probability
distributions~\cite[Lemma~2.1]{Studeny}. Thus they may immediately be
applied to any set of statements about the conditional independences
among random variables to derive additional knowledge about the independence
structure of the stochastic system.
The historical reference on the topic, after Dawid, is~\cite{PearlPaz},
where the special case of \emph{graphoids} is introduced and first named
in the context of graphical models. See also the historical overview in
\cite{GMStudeny}.
By~deduction or inference rules, we always mean \emph{CI~inference formulas}
(or \emph{inference forms})
\[
  \label{eq:Inf} \tag{$\Rightarrow$}
  \bigwedge_s a_s \;\Rightarrow\; \bigvee_t c_t
\]
demanding that if all of the antecedent statements~$a_s$ are satisfied,
then at least one of the consequent statements~$c_t$ is satisfied.
Which inference formulas are true, i.e., \emph{inference rules}, depends
on assumptions about the type of probability distribution. Distributions
with only binary random variables may satisfy more inference rules than
arbitrary discrete distributions, which may satisfy yet other rules than
the continuous Gaussian distributions.
For this reason, the structures of conditional independence for restricted
classes of distributions have become objects of mathematical study in their
own right. Given a class of distributions, such as Gaussians, a~central task
is to solve the \emph{inference problem}: to decide when an inference
form~\eqref{eq:Inf} is valid for all distributions in the class, and then
to list all of them for use in expert systems. See~\cite{GeigerPearl} for
early work and the recent~survey~\cite{InformationDecision}.

For~a detailed introduction to probabilistic representations of CI~structures,
we refer to~\cite{Studeny}. The~recent book~\cite{SullivantBook} contains an
algebraically-minded introduction to CI of discrete and Gaussian distributions
in Chapter~4. Chapter~13 of the same book discusses undirected and directed
acyclic graphical models. These have been of interest since at least the
works of Lauritzen~\cite{LauritzenBook} and Pearl~\cite{PearlBook} and have
found many uses in practice. These types of graphs are special cases of
both, discrete as well as Gaussian, CI~models; see~\cite[Section~13.2]{SullivantBook}.
An~overview and unification of the CI~inference calculi of various other sorts
of graphical models can be found~in~\cite{UnifyingMarkov}.

The research into CI~structures of discrete random variables in the late
1980s was in part driven by the conjecture of Pearl and Paz that to every
semi\-graphoid a matching vector of discrete random variables may be found
whose CI~structure is exactly that semigraphoid~\cite[p.~88]{PearlBook}.
An alternative formulation of this conjecture, in logical terminology, is
that the semigraphoid axioms are \emph{complete} for the theory of discrete
CI~structures.
This conjecture was refuted by Studený who exhibited an infinite family of valid
inference rules for discrete CI which are not implied by any finite set of
valid inference rules~\cite{StudenyNonfinite}. This not only proves that the
semigraphoid axioms are not complete, but that no finite list of axioms can
be complete for discrete CI.
Studený's inference rules naturally require a growing number of random
variables but they also use more and more antecedents. A~finite complete
list of valid inference rules being impossible to obtain, the conjecture
was revised to state that the semigraphoid axioms are complete for only
those inference rules of discrete CI which have at most two antecedents,
as the semigraphoid axioms themselves do. This was in turn resolved
positively by Studený~\cite{StudenyTwoAntecedental}. In this sense,
the semigraphoid axioms are the most fundamental laws of CI on discrete
random vectors: they logically imply all the non-trivial valid inference
rules with the lowest number of antecedents, or premises --- those which
one would expect to be most acceptable in human reasoning.

The same story unfolds for multivariate regular Gaussian distributions,
with more or less 15 years delay. In the case of Gaussian distributions,
CI~structures strike a fine balance between varied and highly structured.
For example, the inference mechanisms of Markov and Bayesian networks in
graphical modeling are special cases of Gaussian CI~models, but the general
theory is not more complicated than real algebraic geometry
(see~\Cref{ImplicationAlgebraic}).
The role of the semigraphoid axioms as the most essential inference rules
is here played by the \emph{gaussoid axioms}. These, likewise two-antecedental,
inference rules have been found by Matúš in~\cite{MatusGaussian} and the
term \emph{gaussoid} was coined in the work~\cite{LnenickaMatus} by Lněnička
and Matúš who classified the CI~structures which arise from Gaussians on
three and four random variables. Realizability in the three-variate case
is characterized by the gaussoid axioms. On four random variables, higher
inference rules arise, but none with only two antecedents. Around the same
time, Sullivant~\cite{SullivantNonfinite}, and independently Šimeček~\cite{SimecekNonfinite}
in a different framework, showed that no finite list of inference rules
suffices to deduce all valid inference rules for $n$-variate Gaussian
distributions, as $n$ grows.
In~this article we prove the Gaussian analogue of Studený's two-antecedental
completeness result in~\Cref{TwoAntecedental}: every valid inference rule
for Gaussians on any number of random variables but with at most two antecedents
can be deduced from the gaussoid axioms. Hence, the gaussoid axioms are the
most essential deduction rules for CI~inference on Gaussian random variables.

This work is structured as follows: \Cref{sec:Prelim} gives an introduction
to Gaussian conditional independence structures and their symmetries.
\Cref{sec:MainResult} gives a precise statement of the main technical result
\Cref{MainThm} and derives the titular two-antecedental completeness result
\Cref{TwoAntecedental} from it. At the end of \Cref{sec:MainResult}, a more
detailed summary of the proof strategy is given. The subsequent sections
then form the technical part of the paper which execute the strategy:
\Cref{sec:Group} discusses an algebraic relaxation of Gaussian distributions
together with their symmetries. \Cref{sec:Epsilon} shows how Gaussian
realizability results can be recovered from this relaxation. The proof of
\Cref{MainThm} is presented in a series of lemmas in \Cref{sec:Proof}.
\Cref{sec:Remarks} contains further examples and discussion of
future work.

\section{Preliminaries}
\label{sec:Prelim}

We fix a finite ground set $N$ of size $n$ which labels a vector
$\xi = (\xi_i)_{i \in N}$ of random variables. Suppose that this vector
follows a multivariate Gaussian distribution with mean vector~$\mu$
and positive-definite covariance matrix~$\Sigma$. Its density with
respect to the Lebesgue measure on $\BB R^N$ is given by
\[
  x \;\mapsto\; \frac1{\sqrt{(2\pi)^n \det \Sigma}}
    \exp\left(-\frac12 (x-\mu)^T \Sigma^{-1} (x-\mu) \right).
\]
For distinct $i, j \in N$ and a disjoint subset $K \subseteq N$, the
\emph{conditional independence statement} $\xi_i \CI \xi_j \mid \xi_K$
asserts, informally, that whenever the outcome of the subvector $\xi_K
= (\xi_k)_{k \in K}$ is known, the outcomes of $\xi_i$ and $\xi_j$ are
stochastically independent --- learning one value provides no
additional information on the other. The CI~statement above is
abbreviated by the symbol $(ij|K)$.
In dealing with elements and subsets of the ground set~$N$, we adopt
the following notational conventions:
\begin{inparaenum}
\item an element $i \in N$ may be written in place of the singleton
subset $\{i\} \subseteq N$, and
\item juxtaposition $KL$ of subsets of $N$ abbreviates set union.
\end{inparaenum}
In particular $ijK$ stands for $\{i\} \cup \{j\} \cup K \subseteq N$
and $ij = ji$ holds. This does \emph{not} make the CI~statement
$(ij|K)$ ambiguous because conditional independence of random vectors
is symmetric in $i$~and~$j$. Let $\A_N := \{ (ij|K) : ij \in \binom{N}{2},
K \subseteq N \setminus ij \}$ denote the set of all CI~statements on~$N$.

For a Gaussian vector $\xi$ with positive-definite covariance matrix
$\Sigma$, conditional independence has an algebraic characterization
(see~\cite[Proposition~4.1.9]{SullivantBook}):
\begin{align*}
  \tag{$\CI$} \label{eq:CIdef}
  \text{$(ij|K)$ holds for $\xi$} \;\Leftrightarrow\;
  \det \Sigma_{iK,jK} = 0,
\end{align*}
where $\Sigma_{iK,jK}$ is the submatrix of $\Sigma$ with rows $iK$
and columns $jK$. A submatrix of the form $\Sigma_{K,K}$ with
equal row and column sets is \emph{principal}. A symmetric matrix~$\Sigma$
is positive-definite if and only if all principal minors $\det \Sigma_{K,K}$,
$K \subseteq N$, are positive.
In the submatrix $\Sigma_{iK,jK}$, which is decisive for the CI~statement
$(ij|K)$, the row and column sets $iK$ and~$jK$ differ by only one element
each. They are not principal but \emph{almost-principal} submatrices.
Hence, the true conditional independence statements of a given Gaussian
distribution can be determined by evaluating the almost-principal minors
of the covariance matrix. The mean $\mu$ plays no role and may be assumed
to be~zero. The \emph{CI~structure} of $\Sigma$ is the set of all valid
CI~statements:
\[
  \CIS{\Sigma} := \{ (ij|K) \in \A_N : \det \Sigma_{iK,jK} = 0 \}.
\]

\begin{example}
Let $(\xi_1, \xi_2, \xi_3)$ follow a Gaussian distribution with zero mean
and covariance matrix
\[
  \Sigma = \begin{pmatrix}
    2 & 0 & 1 \\
    0 & 2 & 2 \\
    1 & 2 & 3
  \end{pmatrix}.
\]
There are six possible CI~statements over the ground set $N = 123 = \{1,2,3\}$:
\[
  \A_N = \{ (12|), (12|3), (13|), (13|2), (23|), (23|1) \}.
\]
The three marginal CI~statements $(12|)$, $(13|)$ and $(23|)$ have empty
conditioning sets. Their corresponding almost-principal minors are
off-diagonal entries of $\Sigma$. The other three are $2 \times 2$
determinants:
\begin{equation*}
  \begin{array}{c}
    \det \Sigma_{12|}  = 0, \\
    \det \Sigma_{12|3} = \det \begin{psmallmatrix}
      0 & 1 \\ 2 & 3
    \end{psmallmatrix} = -1,
  \end{array} \quad
  \begin{array}{c}
    \det \Sigma_{13|}  = 1, \\
    \det \Sigma_{13|2} = \det \begin{psmallmatrix}
      1 & 0 \\ 2 & 2
    \end{psmallmatrix} = 2,
  \end{array} \quad
  \begin{array}{c}
    \det \Sigma_{23|}  = 2, \\
    \det \Sigma_{23|1} = \det \begin{psmallmatrix}
      2 & 0 \\ 1 & 2
    \end{psmallmatrix} = 4.
  \end{array}
\end{equation*}
Collecting all $(ij|K)$ which evaluate to zero gives the CI~structure
$\CIS{\Sigma} = \{ (12|) \}$.
\end{example}

This example shows that the CI~structure $\{ (12|) \}$ over ground set
$N = 123$ is realizable by a Gaussian distribution. Determining the
CI~structure of a given matrix is easy. The reverse ``synthesis problem''
of deciding whether a given CI~structure has a realization by a Gaussian
distribution involves intricate algebraic relations between the
principal and almost-principal minors of a generic positive-definite
matrix. An~algebraic proof that a CI~structure $\A$ is non-realizable
always comes with a valid inference rule $\varphi$ for Gaussians which
is not satisfied by~$\A$, such as in the next example.

\begin{example} \label{WeakTrans}
Let again $N = 123$ be a three-element ground set. To check whether the
CI~structure $\A = \{ (12|), (12|3) \}$ is realizable by a Gaussian
distribution, write a generic covariance matrix
\[
  \Sigma = \begin{pmatrix}
    p & a & b \\
    a & q & c \\
    b & c & r
  \end{pmatrix}
\]
and consider the equations imposed on $\Sigma$ by $\A$:
\begin{align*}
  0 = \det \Sigma_{12|} = a
  \quad\text{and}\quad
  0 = \det \Sigma_{12|3} = \det \begin{psmallmatrix}
    a & b \\
    c & r
  \end{psmallmatrix} = ar - bc.
\end{align*}
Together these equations imply $bc = 0$ and therefore $b = 0$ or $c = 0$.
This proves that the inference~rule
\[
  (12|) \wedge (12|3) \Rightarrow (13|) \vee (23|)
\]
is valid for all Gaussian distributions. Since $\A$ does not satisfy
this inference rule, it is not realizable by a Gaussian distribution.
It is easy to see that the two inclusion-minimal CI~structures which
extend $\A$ and which are realizable by a positive-definite matrix
are $\{ (12|), (12|3), (13|), (13|2) \}$ and $\{ (12|), (12|3),
(23|), (23|1) \}$ corresponding to the two alternative conclusions
of the inference~rule.
\end{example}

A \emph{gaussoid} is a subset of $\A_N$ which is closed under the
gaussoid axioms:
\begin{alignat}{4}
\label{eq:G1} \tag{$\G G$.i}   & (ij|L)  &&\wedge (ik|jL) &&\;\Leftrightarrow\; (ik|L)  &&\wedge (ij|kL), \\[-.2em]
\label{eq:G2} \tag{$\G G$.ii}  & (ij|kL) &&\wedge (ik|jL) &&\;\Leftrightarrow\; (ij|L)  &&\wedge (ik|L),  \\[-.2em]
\label{eq:G4} \tag{$\G G$.iii} & (ij|L)  &&\wedge (ij|kL) &&\;\Rightarrow\;     (ik|L)  &&\vee   (jk|L),
\end{alignat}
for all distinct $i, j, k \in N$ and $L \subseteq N \setminus ijk$.
The first two formulas decompose into $2 \cdot 2 \cdot 2 = 8$ inference
forms in total (some of which are redundant due to symmetries in the
axioms); the third formula is a single inference form. The
axiom~\eqref{eq:G1} defines semigraphoids, the ``$\Rightarrow$''
direction of~\eqref{eq:G2} is known as the intersection axiom.
These six inference rules together form the definition of a
\emph{graphoid}. The ``$\Leftarrow$'' direction of~\eqref{eq:G2},
the converse of intersection, is the composition axiom. The final
axiom~\eqref{eq:G4} is weak transitivity, a special case of which
was proved in \Cref{WeakTrans}. Thus gaussoids are the weakly
transitive, compositional graphoids, or the weakly transitive
semigaussoids, as coined in~\cite{DrtonXiao}.

The gaussoid axioms are valid inference rules for Gaussian distributions
on every ground set~$N$ by \cite[Corollary~1]{MatusGaussian}. They are
the ``simplest'' inference rules for Gaussians in the sense that they are
not only necessary but also sufficient for Gaussian realizability over
the three-element ground set, which is the smallest non-trivial size.
They are not sufficient for more than three variables.
The main result of this paper (\Cref{TwoAntecedental}) shows that every
valid inference rule which has at most two antecedents is implied by
the gaussoid axioms --- without restrictions on the finite ground set~$N$.

The symmetries of Gaussian CI~structures play a prominent role in the
proof of the main result in \Cref{sec:Proof}. The symmetric group $\Sym_N$
of permutations of~$N$ acts on the random vector $\xi = (\xi_i)_{i \in N}$
by relabeling the entries. This immediately translates to an action on
CI~statements~as
\[
  (ij|K)^\pi := (\pi(ij)|\pi(K)),
\]
for $\pi \in \Sym_N$. This action is extended element-wise to CI~structures.
The equivalence relation induced by this group action on CI~structures is
\emph{isomorphy}. On covariance matrices, the group acts accordingly by
simultaneous permutation of the rows and columns or, equivalently, a~permutation
of the axes of $\BB R^N$. This is an orthogonal coordinate change and thus
preserves positive-definiteness. Another important symmetry is the one
induced by \emph{duality} of CI~statements,
\vskip -0.9em
\[
  (ij|K)^* := (ij|N \setminus ijK),
\]
which swaps a conditioning set~$K$ with its complement in $N \setminus ij$.
On covariance matrices, duality corresponds to matrix inversion
$\Sigma \mapsto \Sigma^{-1}$ so that $\CIS{\Sigma^{-1}} = \CIS{\Sigma}^*$;
cf.~\cite[Lemma~1]{LnenickaMatus}.
Both, isomorphy and duality, are special cases of a larger symmetry group,
the hyperoctahedral group $\HypOct_N$, which is generated by the reflection
symmetries of the $N$-dimensional cube. The combinatorial motivation for
considering this group is explained in \cite{ConstructionMethods}, but it
is not important for the present work.
As an abstract group, $\HypOct_N$ equals the semidirect product
$(\BB Z/2)^N \rtimes \Sym_N$, i.e., each of its elements can be uniquely
written as a composition of a \emph{swap} from $(\BB Z/2)^N$ and a
\emph{permutation} from $\Sym_N$. Each vector in the group of swaps
$(\BB Z/2)^N$ is an indicator vector of a subset $Z \subseteq N$ and
acts on a CI~statement via
\vskip -0.8em
\[
  (ij|K)^Z := (ij|K \oplus (Z\setminus ij)),
\]
where $\oplus$ denotes the symmetric difference $A \oplus B = A \setminus B
\cup B \setminus A$. Duality is a special case of this action by swapping
everything, i.e.,~$Z = N$. The second constituent of $\HypOct_N = (\BB Z/2)^N
\rtimes \Sym_N$ are the permutations~$\Sym_N$, which simply act by isomorphy.

\begin{example}
Consider the CI~structure $\A = \{ (12|), (12|345), (34|), (34|25) \}$.
This structure satisfies the gaussoid axioms. Let $(1 \; 3)$ be the
cyclic permutation on $N = 12345$ which exchanges $1$ with $3$ and leaves
every other element fixed. The images of $(1 \; 3)$, duality and swap by
$Z = 123$ on $\A$ are, respectively:
\begin{align*}
  \A^{(1 \, 3)}     &= \{ (23|), (23|145), (14|), (14|25) \},   \\[-.2em]
  \A^* = \A^{12345} &= \{ (12|345), (12|), (34|125), (34|1) \}, \\[-.2em]
  \A^{123}          &= \{ (12|3), (12|45), (34|12), (34|15) \}.
\end{align*}
It is easy to see that the gaussoid axioms \eqref{eq:G1}--\eqref{eq:G4}
are invariant under the hyperoctahedral group. Thus, every CI~structure
obtained above from $\A$ by one of the group actions must be a gaussoid
as~well.
\end{example}

In contrast to gaussoids, realizable Gaussian CI~structures are invariant
only under isomorphy and duality, but not under the hyperoctahedral group.
The $\HypOct_N$ action can be defined on symmetric matrices but it does not
preserve positive-definiteness. This is explained in detail in
\Cref{sec:Group}. That section also extends the definition of realizability
via \eqref{eq:CIdef} to matrices over ordered fields other than~$\BB R$ and,
if the field is not ordered, to matrices whose principal minors are non-zero
instead of positive, giving rise to, respectively, \emph{positively} and
\emph{algebraically realizable} gaussoids over specific~fields. Algebraically
realizable CI~structures turn out to be invariant under the hyperoctahedral
group, which is a key ingredient to the proof of our main result.

\section{Statement of the main result}
\label{sec:MainResult}

On a fixed ground set $N$, each subset of $\A_N$ corresponds to an
assignment of truth values to Boolean variables indexed by~$\A_N$.
Every set of CI~structures (a set of truth assignments to the
elements of~$\A_N$) is the set of satisfying assignments to a Boolean
formula in conjunctive normal form (CNF) whose variables are in~$\A_N$.
We restrict attention to the clauses of these CNFs. Each clause $\varphi$
is a disjunction of negated and non-negated statements from $\A_N$ and
can be written in inference form
\[
  \varphi : \bigwedge_s (i_s j_s | K_s) \;\Rightarrow\; \bigvee_t (x_t y_t | Z_t).
\]
This the general form of CI~inference axioms, which are the subject
of this article.

\begin{definition}
Let $\BB A \supseteq \BB A\!^*$ be sets of CI structures over a fixed
ground set~$N$. $\BB A$~is a \emph{$k$-antecedental approximation} of
$\BB A\!^*$ if every inference form $\varphi$ with at most $k$ antecedents
and variables in $\A_N$ which is valid for $\BB A\!^*$ is also valid for
$\BB A$.
\end{definition}

We imagine $\BB A$ to be a simpler set approximating $\BB A\!^*$ from above.
Because of the inclusion $\BB A\!^* \subseteq \BB A$, every inference rule
which is valid for $\BB A$ also holds for $\BB A\!^*$. The definition above
concerns a degree~$k$ to which the converse holds. In this article, the
role of $\BB A$ is played by gaussoids and that of $\BB A\!^*$ by realizable
gaussoids, for different notions of realizability.
From an axiomatic point of view, one may also say that the axioms for~$\BB A$
are \emph{$k$-antecedentally complete} for the chosen notion of
realizabillity~$\BB A\!^*$.

Our proof of the two-antecedental approximation property of gaussoids relies
on a general principle which was also used in Studený's proof for discrete~CI.
A~\emph{minimal $\BB A$-extension} of a CI structure $\G A$ is a
CI~structure~$\G A'$ which is inclusion-minimal with the properties that
$\G A' \supseteq \G A$ and $\G A' \in \BB A$. In the world of discrete~CI
and semigraphoids, this minimal extension is unique, because both,
discretely realizable CI~structures and semigraphoids, are closed
under~intersection; but for Gaussians and gaussoids this is not~true,
as seen in \Cref{WeakTrans}.

\begin{lemma} \label{ApproxLemma}
Let $\BB A \supseteq \BB A\!^*$ be sets of CI structures over~$N$.
Then $\BB A$ is a $k$-antecedental approximation of~$\BB A\!^*$ if
every minimal $\BB A$-extension of every subset of $\A_N$ of cardinality
at most $k$ belongs to $\BB A\!^*$.
\end{lemma}

\begin{proof}
Let $\varphi: \bigwedge \G L \Rightarrow \bigvee \G M$ be a valid
inference rule for $\BB A\!^*$ with $|\G L| \le k$. We have to show
that $\varphi$ is valid for $\BB A$. Equivalently, letting $\BB A(\varphi)$
denote the subset of $\BB A$ which satisfies $\varphi$, we show that
$\BB A \subseteq \BB A(\varphi)$.

Consider any $\G A \in \BB A$. If the antecedents $\G L$ of $\varphi$
are not contained in $\G A$, then $\G A$ is vacuously contained in
$\BB A(\varphi)$. On the other hand, if $\G A$ contains $\G L$,
then it also contains a minimal $\BB A$-extension $\G L'$ of~$\G L$.
Since $|\G L| \le k$, the structure $\G L'$ belongs to $\BB A\!^*$
by assumption. Hence $\G L'$ satisfies $\varphi$, which means that
$\G L' \cap \G M \not= \emptyset$. Then $\G A$, containing $\G L'$,
also satisfies~$\varphi$.
\end{proof}

We can now state the main result:

\begin{mainthm} \label{MainThm}
Over every ground set, every minimal gaussoid extension of at most two
CI~statements is realizable by a positive-definite matrix with rational
entries, which can be picked arbitrarily close to the identity matrix.
\end{mainthm}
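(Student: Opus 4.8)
The plan is to prove \Cref{MainThm} directly as a realizability statement; the titular two-antecedental completeness then follows at once by feeding it into \Cref{ApproxLemma}. I would organize everything around the symmetries. The central observation is that, although positive-definite realizability is preserved only by isomorphy and duality, \emph{algebraic} realizability is invariant under the full hyperoctahedral group $\HypOct_N$, and so are the gaussoid axioms and hence the operation of forming minimal gaussoid extensions. This lets me classify the minimal gaussoid extensions of at most two CI~statements up to $\HypOct_N$ and work with a short finite list of orbit representatives instead of all pairs of statements on all ground sets.

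I would begin by cutting down the ground set and the number of cases. A set of at most two statements $(i_1 j_1|K_1), (i_2 j_2|K_2)$ involves at most four primary indices and two conditioning sets. The degenerate cases are easy: a single CI~statement is vacuously closed under the gaussoid axioms, since each axiom's antecedent is a conjunction of two statements, so it is its own minimal extension and is realized by a matrix near $\Id$ with a single prescribed off-diagonal relation. For two statements I would use swaps from $(\BB Z/2)^N$ to normalize the conditioning sets and permutations from $\Sym_N$ to normalize the primary indices, collapsing each membership pattern of the remaining elements to a bounded number of representatives; elements lying in neither statement are genuinely inert and can be treated uniformly. This reduces the problem to finitely many gaussoids on a bounded ground set, which — because weak transitivity \eqref{eq:G4} can force a disjunction, as in \Cref{WeakTrans} — may branch into several inclusion-minimal extensions, each of which must be realized.

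Next I would construct and transport the realizations. For each orbit representative I would exhibit an explicit symmetric rational matrix $\Sigma = \Id + \eps M$, depending on a small rational parameter $\eps$, and verify that $\CIS{\Sigma}$ equals the target gaussoid: the prescribed almost-principal minors vanish identically in $\eps$ by design, while near $\Id$ every principal minor is close to $1$ and every non-prescribed almost-principal minor is close to a nonzero value, so no spurious statements appear and, crucially, $\Sigma$ is automatically positive-definite. This already yields rational positive-definite realizations inside every $\eps$-ball around $\Id$ for the representatives. To spread them over the whole $\HypOct_N$-orbit I would transport along the group action, which on matrices is realized by principal pivot transforms, duality being full inversion and a general swap a partial pivot. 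The key point is that these transforms fix $\Id$ and are rational and continuous near it; hence the pivot transform of a rational matrix close to $\Id$ is again rational and close to $\Id$, and therefore still positive-definite. This is the mechanism of \Cref{sec:Epsilon} by which a genuinely Gaussian (positive) realization is recovered from the hyperoctahedrally transported algebraic one, and it is consistent with positive-definiteness failing to be $\HypOct_N$-invariant globally: it is the property of being realizable \emph{near} $\Id$ that is transported, not realizability as such. Finally, for an arbitrary ground set I would pad with independent coordinates via a block $\Id$, which preserves both rationality and closeness to $\Id$ and supplies exactly the CI~statements forced on the inert elements.

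I expect the combinatorial heart — producing the finite list of orbit representatives and certifying that each explicit $\Sigma = \Id + \eps M$ realizes its gaussoid \emph{exactly} — to be the laborious part, since one must check simultaneously that all prescribed almost-principal minors vanish and that none of the remaining ones do, for all sufficiently small $\eps$, and that the list of extensions (including the branches created by weak transitivity) is complete. The conceptually delicate point is the interface handled in \Cref{sec:Group,sec:Epsilon}: setting up algebraic and positive Gaussians so that algebraic realizability is manifestly $\HypOct_N$-invariant while positive realizability near $\Id$ is stable under the pivot transports. Once this bridge is in place, the symmetry reduction and the near-identity constructions combine to give the theorem.
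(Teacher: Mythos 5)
Your overall architecture (feed realizability into \Cref{ApproxLemma}, classify pairs of CI statements up to the hyperoctahedral group, realize finitely many representatives near the identity over $\BB Q$, transport along the group) matches the paper's, but your transport step contains a genuine error, and it is precisely the error the paper's machinery is built to avoid. The swap part of the hyperoctahedral action on matrices does \emph{not} fix the identity matrix: in the parametrization of \Cref{HypOctMinors}, a swap by $Z$ sends $\Id$ to the diagonal matrix with $-1$ in the positions of $Z$ and $+1$ elsewhere, so transporting a realization near $\Id$ of a representative yields a realization near an \emph{indefinite} diagonal $\pm 1$ matrix --- positive-definiteness is lost exactly where you need it. This cannot be repaired by a cleverer matrix implementation of the swap (note that a genuine principal pivot transform of a symmetric matrix is in general no longer symmetric; the sign twists required to stay symmetric are what move $\Id$ off itself). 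Indeed, if \emph{any} rational, near-$\Id$-continuous implementation fixing $\Id$ existed, then ``rationally realizable near $\Id$'' would be $\HypOct_N$-invariant, and \Cref{NearHypOct} refutes this: the gaussoid $\{(12|),(12|34),(34|1),(34|2)\}$ is not positively realizable over $\BB R$ at all, yet its hyperoctahedral image $\{(12|3),(12|4),(34|1),(34|2)\}$ is rationally realizable near the identity. Your parenthetical claim that only \emph{global} positive realizability fails to be invariant while near-$\Id$ realizability is transported is exactly what this example contradicts.

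The paper's fix, missing from your proposal, is \Cref{MainTool}: one realizes each orbit representative near \emph{every one} of the $2^n$ hyperoctahedral images of the identity (all diagonal $\pm 1$ matrices); only then does the inverse group action carry the correct one of these realizations to a realization of an arbitrary orbit member near $\Id$ itself. This is why the explicit matrices in \Cref{ijik,ijij} have arbitrary fixed $\pm 1$ diagonals rather than a $1$-diagonal, and your single construction $\Sigma = \Id + \eps M$ per representative would have to be redone for all sign patterns, checking that the non-vanishing arguments survive the signs. The rest of your sketch is sound and parallels the paper: singletons handled by embedding/marginalization rather than by the raw group action (\Cref{AtmostOne}), non-gaussoid pairs reduced to $3$-faces and settled by the known rational near-identity realizations of $3$-gaussoids, padding by independent coordinates via \Cref{EpsilonSum,Embedding}, and the reduction of the seven entanglement cases to three representatives --- though the actual certificates need many independent infinitesimals and imposed algebraic relations (e.g.\ the Gram--Schmidt construction in \Cref{ijij}), not a single parameter $\eps$.
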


There is a notion of algebraic realizability over every field and of
positive realizability over every ordered field. In each case, testing
realizability of a fixed gaussoid means deciding if a system of polynomial
equations, inequations and (for positivity) inequalities with integer
coefficients has a solution. Since the rational numbers are the prime
field of characteristic zero, finding a positive and rational solution
is the hardest problem among all realizabilities over fields of
characteristic~zero.

The fact that the approximation is valid uniformly over all ground sets
is significant. Generally, the realizabilities considered here do not have
a finite axiomatization. This means that, as the ground set grows, evermore
logically independent inference rules become necessary to cut out the
realizable gaussoids. \Cref{MainThm} implies that among these new inference
rules for larger and larger ground sets, the easiest ones, up to two
antecedents, are all logical consequences of the well-known gaussoid axioms.
This is the titular

\begin{corollary} \label{TwoAntecedental}
Gaussoids are two-antecedental approximations of algebraic and of
positive Gaussian conditional independence structures over characteristic
zero.
\qed
\end{corollary}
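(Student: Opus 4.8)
The plan is to read the corollary off \Cref{MainThm} by way of \Cref{ApproxLemma} with $k=2$. Fix a ground set $N$ and a field $\BB F$ of characteristic zero, which I take to be ordered in the positive case. Let $\BB A$ be the set of gaussoids over $N$ and let $\BB A^*$ be the set of algebraically (respectively positively) realizable CI structures over $\BB F$. First I would record the containment $\BB A \supseteq \BB A^*$: every realizable CI structure obeys the gaussoid axioms and hence is a gaussoid. For classical real Gaussians this is \cite[Corollary~1]{MatusGaussian}, and the extension of this validity to algebraic and positive Gaussians over characteristic zero is supplied by the algebraic relaxation developed in \Cref{sec:Group}. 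Thus $\BB A^*$ genuinely consists of realizable gaussoids and lies inside $\BB A$, as the definition of an approximation requires.

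The content of the argument is to check the hypothesis of \Cref{ApproxLemma}: that every minimal gaussoid extension of a subset of $\A_N$ of cardinality at most two lies in $\BB A^*$. This is exactly where \Cref{MainThm} enters, furnishing for each such minimal extension a positive-definite matrix with \emph{rational} entries whose CI structure is precisely that gaussoid. What remains is to transfer this single rational witness to an arbitrary characteristic-zero field. The prime field $\BB Q$ embeds into $\BB F$, and a field embedding is injective and fixes $0$; therefore it preserves the vanishing and non-vanishing of every almost-principal minor, leaving the realized CI structure unchanged. In the algebraic case the principal minors are nonzero rationals and stay nonzero; in the positive case $\BB F$ is ordered, positive rationals remain positive, and positive-definiteness (positivity of all principal minors) is preserved. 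Hence the same rational matrix certifies membership of the minimal extension in $\BB A^*$ over every field of characteristic zero --- which is the earlier remark that a positive rational solution is the hardest realizability to achieve in characteristic zero. (The additional strength in \Cref{MainThm} that the realization may be taken arbitrarily close to the identity is not needed here.)

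With both hypotheses of \Cref{ApproxLemma} verified, I would conclude that $\BB A$ is a two-antecedental approximation of $\BB A^*$; since $N$ and $\BB F$ were arbitrary, this holds over every ground set and for the algebraic as well as the positive notion over characteristic zero, which is precisely the corollary. I do not anticipate a real obstacle in this last step: the entire difficulty is concentrated in \Cref{MainThm}, whose construction of the realizations occupies \Cref{sec:Proof}. Within the corollary itself the only point needing care is the field-transfer argument, and even that is routine once one observes that every characteristic-zero field contains $\BB Q$ and that embeddings preserve (non)vanishing and, in the ordered case, positivity.
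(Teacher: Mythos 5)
Your proposal is correct and follows essentially the same route as the paper: the corollary is read off from \Cref{MainThm} via \Cref{ApproxLemma} with $k=2$, using the fact that a rational positive-definite realization is the strongest certificate (since $\BB Q$ is the prime field of characteristic zero, its embedding into any such field preserves non-vanishing, and into any ordered such field preserves positivity). The paper leaves these transfer details implicit (hence the \qed), while you spell them out; there is no substantive difference.
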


The realizability proofs for~\Cref{MainThm} are composed of two ideas
to be presented in the next two sections, respectively.
The first idea is to relax the positive-definiteness requirement
and study instead the aforementioned algebraic Gaussians over general fields.
The resulting CI structures are still gaussoids and they are closed under
the action of the hyperoctahedral group which allows passing to an easier
orbit representative in realizability proofs.
The~second idea, compensating the previous relaxation, is to study more
special realizations, namely rational parametrizations of spaces of
matrices.
Formally, such a space is represented by a matrix whose entries are
elements of the rational function field $\BB Q(\eps_1, \dots, \eps_p)$
with infinitesimal variables $\eps_k$. In this space, the algebraically
realized gaussoid over $\BB Q$ is determined uniquely by the rational
functions for all sufficiently small values of the~$\eps_k$.
If the parametrized matrices converge to a positive-definite matrix as
the~$\eps_k$ tend to zero, then matrices in the interior of this space
are positive-definite for small enough~$\eps_k$. In this way, positive
realizability of a gaussoid is recovered from an algebraic construction
and inspection of a limit.
In the proof of the main theorem in~\Cref{sec:Proof}, both techniques
are applied to minimal gaussoid extensions of at most two CI statements.
We turn the problem of finding positive-definite realizations around and
instead find spaces of matrices realizing \emph{one} easy hyperoctahedral
representative of each gaussoid orbit converging to \emph{every}
hyperoctahedral image of the identity matrix. Then, given any gaussoid
in the representative's orbit, we apply the inverse group action to the
right space of matrices so that the transformed space approaches the
positive-definite identity matrix and realizes the given gaussoid.
This yields the desired rational regular Gaussian realizations.

\section{Algebraic Gaussians and the hyperoctahedral group}
\label{sec:Group}

The gaussoid axioms were derived in~\cite[Corollary~1]{MatusGaussian}
as consequences of an identity among minors of a complex symmetric
matrix, where the proof of each instance of an axiom requires certain
principal minors of the matrix not to vanish. It follows that the
gaussoid axioms hold for the set $\CIS{\Gamma} = \{ (ij|K) \in \A_N :
\det \Gamma_{ij|K} = 0 \}$ of vanishing almost-principal minors of
any symmetric matrix~$\Gamma$, provided that all of its principal
minors are non-zero. Such matrices are \emph{principally regular}.
In this article positive-definite and principally regular matrices
are implicitly symmetric.
Moreover, $\Gamma$ can have entries from any field~$\BB K$ because,
while Matúš's result is stated for complex matrices only, special
structure of the complex numbers is only invoked in a continuity
argument which is circumvented by the assumption of principal
regularity.

Principal regularity is not only an obvious substitute for
positive-definiteness over arbitrary fields. It is the technical
condition which allows the formation of all Schur complements of
the matrix, which correspond to conditional distributions in the
positive-definite setting. The~property is inherited by the inverse
matrix, by principal submatrices and Schur complements and therefore
is precisely what is required to salvage the theory of minors of
regular Gaussians; see~\cite{ConstructionMethods}.

\begin{definition} \label{Realizable}
A gaussoid $\G G$ is \emph{algebraically realizable (over $\BB K$)}
if there is a principally regular matrix $\Gamma$ over $\BB K$ such
that $\G G = \CIS{\Gamma}$. If in addition $\BB K$ is an ordered
field and $\Gamma$ has only positive principal minors, then $\G G$
is \emph{positively realizable (over $\BB K$)}.
By slight abuse of language we refer to algebraically realizable
gaussoids as well as their realizing matrices as \emph{algebraic
Gaussians} and similarly for \emph{positive Gaussians}.
\end{definition}

In this terminlogy, the familiar multivariate Gaussian distributions
are positive Gaussians over the real-closed field~$\BB R$. Every
algebraic Gaussian is, by Matúš's corollary, a gaussoid. Conversely,
given any gaussoid $\G G$, its \emph{algebraic realization space} over
a field $\BB K$ is the set of matrices with entries in $\BB K$ that
algebraically realize~$\G G$. It~is specified by the vanishing of the
almost-principal minors in $\G G$, the non-vanishing of the
almost-principal minors not in $\G G$ and the non-vanishing of all
principal minors of a symmetric matrix. The space of these matrices
is a constructible set. The~\emph{positive realization space} over
an ordered field refines the non-vanishing of principal minors
into positivity constraints, resulting in~a~semialgebraic~set.

\begin{remark} \label{ImplicationAlgebraic}
With a fixed notion of realizability, the \emph{CI implication problem}
asks to decide if a given inference form is valid for the class of
realizable CI~structures. There is much previous work about CI~implication
for graphical models and approximations or special cases of discrete~CI;
see for instance~\cite{DAGCausal,GeigerPearlLogical,EfficientCI,LatticeCI}
as well as \cite{InformationDecision} from the point of view of information
theory, and the references therein.
For Gaussians over algebraically closed fields, there is an algebraic
characterization of CI~implication which was observed by~Matúš.
His~result~\cite[Proposition~1]{MatusGaussian} is stated for the
complex numbers but again the proof works over arbitrary algebraically
closed fields, with the crucial ingredient being Hilbert's Nullstellensatz.
The~geometric summary of this characterization is that
$\bigwedge \G L \Rightarrow \bigvee \G M$ holds if and only if in the
affine space of symmetric matrices $\Gamma$, the polynomial
$f_{\G M} = \prod_{(ij|K) \in \G M} \det \Gamma_{ij|K}$ vanishes on the
constructible set that is defined by the vanishing of the almost-principal
minors indexed by $\G L$ and the non-vanishing of the principal minors
of~$\Gamma$, because then on every point on this constructible set
defined by $\G L$ at least one of the minors in $\G M$ vanishes, proving
the validity of the inference.
This set of matrices is obtained as a projection of a variety
in a higher-dimensional affine space by a standard construction and hence
the CI~implication problem amounts to a radical~ideal membership test.
The same idea paired with the Real Nullstellensatz yields a characterization
for algebraic realizability over real-closed fields, employing the real
variety and hence the real radical of the analogously defined ideal.
Finally, the Positivstellensatz can be used similarly to characterize the
CI~implication problem for positive Gaussians over a real-closed field in
terms of an ideal membership query on the radical ideal of the preorder
associated to the positive realization space of~$\G L$.
The reader is referred to~\cite{CoxLittleOShea} and~\cite{Marshall} for
background information on geometry over algebraically and, respectively,
real-closed fields. These characterizations of CI~implication directly
yield a procedure for realizability testing. Namely, a CI structure~$\G L$
is realizable if and only if it does not appear as the antecedent set
of a non-trivial valid inference rule, the weakest of which, and hence
the only one that needs to be tested, is~$\bigwedge \G L \Rightarrow
\bigvee (\A_N \setminus \G L)$.
\end{remark}

Algebraic realizations have two advantages over positive realizations.
The first advantage is that algebraic realizability over algebraically
closed fields tends to be easier to decide in practice, as outlined
in the previous remark, using Hilbert's Nullstellensatz and Gröbner bases,
than positive realizability over a real-closed field, requiring a
Positivstellensatz certificate for non-realizability. Over an ordered
field, a positively realizable gaussoid is always algebraically
realizable.

The second advantage, paramount to the proof of our main theorem,
is that algebraic realizability is preserved under the hyperoctahedral
group introduced in~\Cref{sec:Prelim}.
The hyperoctahedral group action on realizing matrices is a quotient
of the group $(\BB Z/4)^N \rtimes \Sym_N$. This group is, in turn,
a discrete subgroup of the $\SL_2(\BB R)^N$ action on the
Lagrangian Grassmannian; cf.\ \cite{HoltzSturmfels,Geometry}.
In the semidirect product, every group element can be written as the
composition of an element of $\Sym_N$ and one of $(\BB Z/4)^N$.
The permutation part is just an orthogonal coordinate change, permuting
rows and columns of the matrix, corresponding to the $\Sym_N$ action
on CI structures (almost-principal minors) and merely permuting
the set of principal minors. This action changes neither principal
regularity nor positive-definiteness and therefore we focus on the
$(\BB Z/4)^N$ part in the remainder of this section.

Our account of the $(\BB Z/4)^N$ symmetry is based on~\cite[Section~3]{Geometry}
but we give more details about the action on matrices. Each $\BB Z/4$
factor in the $N$-fold product making up the group is represented by
the four $2 \times 2$ matrices
\[
  \BB Z/4 =
  \left\{
    \begin{pmatrix}
      1 & 0 \\ 0 & 1
    \end{pmatrix},
    \begin{pmatrix}
      0 & 1 \\ -1 & 0
    \end{pmatrix},
    \begin{pmatrix}
      -1 & 0 \\ 0 & -1
    \end{pmatrix},
    \begin{pmatrix}
      0 & -1 \\ 1 & 0
    \end{pmatrix}
  \right\}.
\]
To each tuple $(X_1, \dots, X_n) \in (\BB Z/4)^N$ we associate four
$N \times N$ diagonal matrices $A, B, C, D$ such that
\[
  X_i = \begin{pmatrix}
    A_{ii} & B_{ii} \\
    C_{ii} & D_{ii}
  \end{pmatrix}.
\]
The image of a symmetric matrix $\Gamma$ under $(X_1, \dots, X_n)$ is
$\Gamma' = (A + \Gamma C)^{-1}(B + \Gamma D)$. $\Gamma'$ is again symmetric
by~\cite[Lemma~13]{HoltzSturmfels} and the following \Cref{HypOctMinors}
describes its principal and almost-principal minors.
To facilitate this description we use a parametrization of this group
action. For any subset $Z \subseteq N$ and a tuple of signs $\delta \in
\{\pm 1\}^N$, choose the group element $(X_1, \dots, X_n)$ where
\[
  X_i = \delta_i \begin{cases}
    \begin{psmallmatrix} 1 &  0 \\ 0 & 1 \end{psmallmatrix}, & \text{$i \not\in Z$}, \\
    \begin{psmallmatrix} 0 & -1 \\ 1 & 0 \end{psmallmatrix}, & \text{$i \in Z$}. \\
  \end{cases}
\]
Then the action can be written as $\G S_Z^\delta(\Gamma) :=
(A + \Gamma C)^{-1}(B + \Gamma D)$ with
\begin{equation*}
\begin{aligned}[c]
  A_{ii} =  D_{ii} &= \begin{cases}
    \delta_i, & \text{$i \not\in Z$}, \\
    0, & \text{$i \in Z$},
  \end{cases}
\end{aligned}
\qquad\qquad
\begin{aligned}[c]
  C_{ii} = -B_{ii} &= \begin{cases}
    0, & \text{$i \not\in Z$}, \\
    \delta_i, & \text{$i \in Z$}.
  \end{cases}
\end{aligned}
\end{equation*}
In expressing minors of $\G S_Z^\delta(\Gamma)$ in terms of $Z$, $\delta$
and $\Gamma$, it becomes necessary to recombine the involved subsets of $N$.
Using the abbreviations $AB = A \cup B$ and $[AB] = A \cap B$ as well as
$A^\comp$ for the complement of $A$ in $N$, any combination of interest
can be efficiently written down in ``disjunctive normal form''.
For~example, $[ZK^\comp][Z^\comp K] = (Z \cap K^\comp) \cup (K \cap
Z^\comp) = (Z \setminus K) \cup (K \setminus Z) = Z \oplus K$.

One subtlety in the following statement and its proof concerns the signs
of minors. In order to have a well-defined sign of the determinant, we
employ the following \emph{sign convention} about the ordering of rows
and columns in submatrices. Because the sign is invariant under simultaneous
permutations of the rows and columns, the absolute ordering is not important,
but instead which row is matched with which column in any ordering of the
row and column sets. Instead of imposing an absolute order on the set $N$,
it will be convenient to pair every $k \in K$ with itself, in principal
minors with respect to $K$, and in almost-principal minors $(ij|K)$ to
additionally pair $i$~and~$j$.

\begin{proposition} \label{HypOctMinors}
Let $\Gamma$ be principally regular over $\BB K$ and $Z \subseteq N$ and
$\delta \in \{\pm 1\}^N$ be arbitrary. Then $\Gamma' = \G S_Z^\delta(\Gamma)$
is principally regular over $\BB K$. The gaussoid $\CIS{\Gamma'} = \CIS{\Gamma}^Z
= \{ (ij|(Z \setminus ij) \oplus K) : (ij|K) \in \CIS{\Gamma} \}$.
More precisely, we have the following formulas for the principal and
almost-principal minors of $\Gamma'$:
\begin{align*}
  \Gamma'_K      &= (-1)^{[ZK]} \det \Gamma_Z^{-1} \cdot \det \Gamma_{Z \oplus K}, \\
  \Gamma'_{ij|K} &= (-1)^{[ZK]} \det \Gamma_Z^{-1} \cdot \delta_i \delta_j \det \Gamma_{ij|(Z \setminus ij) \oplus K}.
\end{align*}
\end{proposition}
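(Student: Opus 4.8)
The plan is to treat the two displayed minor formulas as the heart of the matter, because they yield the remaining two assertions at once. The principal formula writes $\det\Gamma'_{K,K}$ as $(-1)^{[ZK]}\det\Gamma_Z^{-1}\det\Gamma_{Z\oplus K}$, a product of principal minors of $\Gamma$, all nonzero by principal regularity of $\Gamma$; hence $\det\Gamma'_{K,K}\neq0$ for every $K$, i.e.\ $\Gamma'$ is principally regular. The almost-principal formula shows $\det\Gamma'_{ij|K}=0$ if and only if $\det\Gamma_{ij|(Z\setminus ij)\oplus K}=0$, since the prefactor $(-1)^{[ZK]}\det\Gamma_Z^{-1}\delta_i\delta_j$ never vanishes; substituting $L=(Z\setminus ij)\oplus K$ and using that $\oplus(Z\setminus ij)$ is an involution identifies $\CIS{\Gamma'}$ with $\CIS{\Gamma}^Z$. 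So the whole proposition reduces to the two minor formulas.

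First I would separate the signs from the swap. With $\Delta=\mathrm{diag}(\delta_i)$ one checks, either from the coordinatewise factorization of the group element in $(\BB Z/4)^N$ or by a direct block computation, that $\G S_Z^\delta(\Gamma)=\Delta\cdot\G S_Z^{\mathbf 1}(\Gamma)\cdot\Delta$. Since $\Delta$ is diagonal with $\delta_i^2=1$, pulling the row and column scalars out of a determinant contributes the factor $1$ to every principal minor and the factor $\delta_i\delta_j$ to the almost-principal minor $\det\Gamma'_{ij|K}$ (the conditioning indices $k\in K$, paired with themselves by the sign convention, each contribute $\delta_k^2=1$). This explains the $\delta_i\delta_j$ in the second formula and reduces the problem to the sign-free pivot $M:=\G S_Z^{\mathbf 1}(\Gamma)$.

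Next I would identify $M$ with the principal pivot transform of $\Gamma$ at $Z$. Inverting the block-triangular matrix $A+\Gamma C$, whose determinant is $\det\Gamma_Z\neq0$, and multiplying by $B+\Gamma D$ puts $M$ in closed form with diagonal blocks $\Gamma/\Gamma_{Z,Z}$ and $-\Gamma_{Z,Z}^{-1}$ and off-diagonal blocks $\Gamma_{Z^\comp Z}\Gamma_{Z,Z}^{-1}$ and $\Gamma_{Z,Z}^{-1}\Gamma_{Z Z^\comp}$. I would then prove both minor formulas for $M$ by induction on $|Z|$, using the composition property $\G S_{Z\cup m}^{\mathbf 1}=\G S_{\{m\}}^{\mathbf 1}\circ\G S_Z^{\mathbf 1}$ for $m\notin Z$ (a consequence of the $(\BB Z/4)^N$ action) to reduce the inductive step to a single pivot. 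For $Z=\{m\}$ the formulas are routine Schur-complement evaluations: expanding $\det M_{iK,jK}$ and $\det M_{K,K}$ along the pivot row and column $m$, and comparing with the Schur complement of the $(m,m)$ entry in the appropriate minor of $\Gamma$ (after a row/column operation when $m\in\{i,j\}$), gives in each of the cases $m\notin ijK$, $m\in K$, $m\in\{i,j\}$ the scaling by $\det\Gamma_{mm}^{-1}$, the enlargement or contraction of the index set by $m$ (that is, the operation $\oplus\{m\}$), and the sign $(-1)^{|\{m\}\cap K|}$. The diagonal entry $M_{mm}=\det\Gamma_{Zm}/\det\Gamma_Z$ that is needed to keep pivoting is nonzero by principal regularity, so the induction never breaks down; over the course of it the signs accumulate to $(-1)^{|Z\cap K|}=(-1)^{[ZK]}$ and the index set becomes $Z\oplus K$ in the principal case and $(Z\setminus ij)\oplus K$ in the almost-principal case.

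The main obstacle is the bookkeeping of signs and index sets under these iterated symmetric differences, carried out consistently with the paper's sign convention of pairing $i$ with $j$ and each $k$ with itself. The delicate point is the asymmetry in the single-pivot step: a pivot at an element of $K$ produces a factor $-1$, whereas a pivot at $i$ or $j$ produces none. This is precisely what makes the exponent $[ZK]$ count only $Z\cap K$ in the almost-principal formula, leaving $i,j$ out even when they lie in $Z$, and it is where the off-diagonal roles of $i$ and $j$ in an almost-principal minor must be tracked with care. Everything else is a chain of elementary Schur-complement determinant identities.
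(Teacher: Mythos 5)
Your proposal is correct, but it follows a genuinely different route from the paper. The paper proves the formulas in one stroke: from the matrix identity $(A+\Gamma C)\Gamma' = B+\Gamma D$ it invokes a generalized Cramer's rule, $\det\Gamma'_{I,J} = \det(A+\Gamma C)^{-1}\det\left[(A+\Gamma C)(I,J : B+\Gamma D)\right]$, computes $\det(A+\Gamma C) = \prod_k\delta_k\cdot\det\Gamma_Z$, and reads off each minor by Laplace expansion on the unit-vector columns of the column-replaced matrix, tracking $\delta$, the sign $(-1)^{[ZK]}$ and the row/column pairing inside this single computation (with a separate case analysis for $j\in Z$ versus $j\notin Z$ in the almost-principal case). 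You instead (i) strip the signs first via $\G S_Z^\delta(\Gamma) = \Delta\,\G S_Z^{\mathbf 1}(\Gamma)\,\Delta$ --- which is valid, since $A+\Gamma C = (A_0+\Gamma C_0)\Delta$ and $B+\Gamma D = (B_0+\Gamma D_0)\Delta$ for the sign-free matrices $A_0,B_0,C_0,D_0$, and conjugation by $\Delta$ multiplies $\det(\cdot)_{iK,jK}$ by exactly $\delta_i\delta_j$ while fixing principal minors; (ii) identify the sign-free map with a signed principal pivot transform; and (iii) induct on $|Z|$ via the composition law $\G S^{\mathbf 1}_{Z\cup m} = \G S^{\mathbf 1}_{\{m\}}\circ\G S^{\mathbf 1}_Z$, reducing everything to one-element Schur-complement pivots. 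The bookkeeping you flag does close up: $(-1)^{|\{m\}\cap K|}(-1)^{|Z\cap K|} = (-1)^{|(Z\cup m)\cap K|}$ and $(Z\setminus ij)\oplus(\{m\}\setminus ij) = (Z\cup m)\setminus ij$ for $m\notin Z$, and the inductively established principal formula keeps each intermediate matrix principally regular, so the next pivot is always defined. Your route buys modularity and transparency: every step is an elementary Schur-complement identity, the symmetry and principal regularity of the intermediate matrices are visible from the block form, and no generalized Cramer's rule is needed. What it costs is one structural input you assert rather than prove: that $\Gamma\mapsto(A+\Gamma C)^{-1}(B+\Gamma D)$ composes as a group action wherever defined, so that disjoint swaps concatenate. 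This is true and short --- from $[\,I\;\;\Gamma\,]g_1g_2 = (A_1+\Gamma C_1)\left[\,A_2+\rho_{g_1}(\Gamma)C_2 \;\;\; B_2+\rho_{g_1}(\Gamma)D_2\,\right]$ one gets $\rho_{g_1g_2} = \rho_{g_2}\circ\rho_{g_1}$ on the common domain --- but it should be spelled out or cited, since the paper's one-shot computation never needs it; in exchange, the paper pays with substantially heavier Laplace-expansion and sign bookkeeping for arbitrary $Z$ and $\delta$ at once.
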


\begin{proof}
The matrix $\Gamma'$ satisfies the matrix equation $(A + \Gamma C) \Gamma' =
B + \Gamma D$ and hence its minors can be computed with a generalized Cramer's
rule~\cite{GeneralizedCramer}:
\[
  \det \Gamma'_{I,J} = \det(A + \Gamma C)^{-1} \det \left[
    (A + \Gamma C)(I,J : B + \Gamma D)
  \right],
\]
for sets $I, J \subseteq N$ of the same size and where $X(I,J : Y)$ denotes
the matrix $X$ where the columns indexed by $I$ are replaced by the columns
of $Y$ indexed by~$J$. In this notation, we omit $J$ if it equals~$I$.
In~addition we use $\delta X$ to denote the matrix $X$ where the $i$th column
is scaled with $\delta_i$.

By definition of $A$ and $C$ we have $A + \Gamma C = \delta \Gamma(Z^\comp : I_N)$
and by Laplace expansion on the unit columns in $Z^\comp$,
\[
  \det(A + \Gamma C) = \prod_k \delta_k \cdot \det \Gamma_Z.
\]

To compute the principal minor for $K \subseteq N$, notice that
\[
  (A + \Gamma C)(K : B + \Gamma D) = \delta [\Gamma(Z^\comp : I_N)](K : \Gamma(Z : -I_N))
    =: \delta \Gamma''.
\]
The columns of $\Gamma''$ are composed as follows:
\begin{description}
\item[{$[ZK^\comp]$}] respective columns of $\Gamma$,
\item[{$[ZK]$}] respective negative unit vectors,
\item[{$[Z^\comp K^\comp]$}] respective unit vectors,
\item[{$[Z^\comp K]$}] respective columns of $\Gamma$.
\end{description}
By Laplace expansion of the unit vector columns, we obtain
\[
  \det \delta \Gamma'' =
    \prod_k \delta_k \cdot (-1)^{[ZK]} \det \Gamma_{[Z K^\comp][Z^\comp K]},
\]
and $[Z K^\comp][Z^\comp K] = Z \oplus K$ proves the principal minor
formula. Notice that the Laplace expansions deleted the same rows and
columns, so the sign convention is preserved.

For the almost-principal minor given by $(ij|K)$, the same procedure yields
the columns of $\Gamma''$
\begin{description}
\item[{$[Z(iK)^\comp]$}] respective columns of $\Gamma$,
\item[{$[ZK]$}] respective negative unit vectors,
\item[{$[Z^\comp (iK)^\comp]$}] respective unit vectors,
\item[{$[Z^\comp K]$}] respective columns of $\Gamma$,
\item[$i$] the $j$th column of $\Gamma(Z : -I_N)$.
\end{description}
Pulling out the $\delta$ signs from the determinant, we get the sign
$\prod_{k \not= i,j} \delta_k \cdot \delta_j^2 = \delta_i \delta_j
\prod_k \delta_k$ because the $i$th column's $\delta_i$ was replaced
by a $j$th column's~$\delta_j$. In addition, performing again Laplace
expansion on the unit vector columns, the determinant so far is
\begin{align*}
\label{eq:Gamma''minor}
\tag{$\diamond$}
  \det \delta \Gamma'' = \delta_i \delta_j \prod_k \delta_k \cdot
    (-1)^{[ZK]} \det \Gamma''_{i[Z(iK)^\comp][Z^\comp K]}.
\end{align*}
The contents of column $i$ of $\Gamma''$ depends on whether $j \in Z$ or not.
If $j \not\in Z$, then the $i$th column of $\Gamma''$ is just the $j$th
column of $\Gamma$. Thus the remaining minor of $\Gamma''$
in~\eqref{eq:Gamma''minor} is revealed to be the almost-principal minor of
$\Gamma$ with row indices $i[Z(iK)^\comp][Z^\comp K] = i[Z(ijK)^\comp][Z^\comp K]$
and column indices $j[Z(ijK)^\comp][Z^\comp K]$. It~is easy to see that the
replacement of column $i$ by column $j$ leaves the rows and columns
correctly paired and it remains to compute the conditioning set as
$[Z(ijK)^\comp][Z^\comp K] = (Z \setminus ij) \oplus K$.

If $j \in Z$, then the $i$th column contains the negative $j$th unit vector.
Laplace expansion with respect to this column results in the column labeled
$i$ and the row labeled $j$ to be removed and incurs a sign change which
depends on the distance between these columns. By simultaneously reordering
rows and columns, we can assume that rows and columns $i$~and~$j$ are next
to each other. In this case, the sign change is~$-1$, which is compensated
by the \emph{entry}~$-1$ in the eliminated column. The reordering ensures
that rows and columns are properly paired after Laplace expansion.
The remaining minor of $\Gamma''$ has row indices
$i[Z(iK)^\comp][Z^\comp K] \setminus j = i[Z(ijK)^\comp][Z^\comp K]$
and column indices $[Z(iK)^\comp][Z^\comp K] = j[Z(ijK)^\comp][Z^\comp K]$
and is thus again the almost-principal minor $(ij|(Z \setminus ij) \oplus K)$
of~$\Gamma$.
\end{proof}

These formulas describe in particular all the entries of $\G S_Z^\delta(\Gamma)$
in terms of $\Gamma$, $Z$ and~$\delta$. Remarkably, the choice of $\delta$
has no influence at all on the principal minors, and only changes the sign
of almost-principal ones.
Hence, by identifying in each $\BB Z/4$ factor the two matrices with opposite
signs, we obtain a quotient group isomorphic to $(\BB Z/2)^N \rtimes \Sym_N$
which faithfully implements the hyperoctahedral group on algebraic gaussoids
over any field. The realizing matrix may not be well-defined but the quotient
is conclusive about its positivity, over ordered fields, and thus can be used
to certify positive realizability of hyperoctahedral images of gaussoids.

\begin{remark} \label{RealizDiagonals}
If $\Gamma$ is a principally regular matrix over a field $\BB K$ and
the diagonal entries $\Gamma_{ii}$ are squares in $\BB K$, then the
diagonal matrix $D$ with entries $1 / \sqrt{\Gamma_{ii}}$ can be formed
over $\BB K$.
It is easy to check that $D \Gamma D$ is a principally regular matrix
which realizes the same gaussoid as $\Gamma$ and has a 1-diagonal.
This argument allows to remove degrees of freedom from the generic
candidate matrix in realizability~tests.

For example, since the complex numbers are closed under square roots,
every algebraic Gaussian over~$\BB C$ has a realization with unit diagonal.
The real numbers are missing $\sqrt{-1}$, so algebraic realizations of
gaussoids over~$\BB R$ can only be assumed to have $\pm 1$ entries on
the diagonal. The restriction to positive realizations restores the
expectation of a $1$-diagonal. Finally, the rational numbers are missing
many square roots, but over~$\BB Q(\sqrt{-1})$ one can at least assume
positive squarefree integers on the diagonal of an algebraic realization.
\end{remark}

\section{Infinitesimally perturbed realizations}
\label{sec:Epsilon}

The theme of this section is perturbing principally regular matrices
to find related but more generic CI~structures in their neighborhood.
By ``more generic'' we mean that fewer almost-principal minors vanish,
which implies smaller gaussoids. Our target, the minimal gaussoid
extensions of at most two CI statements, are evidently among the smallest
possible gaussoids. The focus lies on perturbing the \emph{least} generic
matrices from this point of view (modulo the algebraic torus action
described in \Cref{RealizDiagonals}), which lie in the hyperoctahedral
orbit of the identity matrix.
Formally, our main tool consists of passing to Gaussians over the (ordered)
field of rational functions $\BB K(\eps_1, \dots, \eps_p)$, extending
$\BB K$ with finitely many (infinitesimal) variables.
This~technique already appears in~\cite[Theorem~1]{LnenickaMatus}
as well as~\cite[Theorem~3.3]{SimecekNonfinite} and is on vivid display
in~\cite[Table~1]{LnenickaMatus} --- but always without the formalization
of the concept of positive or algebraic Gaussians over function fields.

By Tarski's Transfer principle~\cite[Theorem~1.4.2]{Marshall} and the
fact that Gaussian realization spaces are described by polynomial
constraints with integer coefficients, every gaussoid which is algebraically
or positively realizable over an ordered field extension of $\BB R$ is
also algebraically or, respectively, positively realizable over~$\BB R$.
This principle applies to the constructions in this section, but the field
of rational functions is special: if a realization over $\BB Q(\eps_1, \dots, \eps_p)$
is found, then plugging in sufficiently small rational numbers for the
$\eps_k$ yields even a rational realization which is not promised by
the Transfer principle. Therefore this section circumvents the Transfer
principle via \Cref{RationalFunctions} and emphasizes rational constructions.

\begin{lemma} \label{RationalFunctions}
Consider the following two situations:
\begin{enumerate}[label=(\arabic*)]
\item
$\BB K$ is an infinite field and $\BB L = \BB K(x_1, \dots, x_p)$
the field of rational functions in variables $x_1, \dots, x_p$
over~$\BB K$.

\item
$\BB K$ is an ordered field and $\BB L = \BB K(\eps_1, \dots, \eps_p)$
the ordered field of rational functions in infinitesimals $0 < \eps_1 <
\dots < \eps_p$ over~$\BB K$.
\end{enumerate}
In both situations, a gaussoid is realizable over $\BB L$ if and only if
it is already realizable over~$\BB K$.
\end{lemma}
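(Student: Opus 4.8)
The ``if'' direction uses no special structure of $\BB L$: the inclusion $\BB K \subseteq \BB L$ is an embedding of fields, respecting the order in situation~(2), so a principally regular matrix $\Gamma$ over $\BB K$ that realizes $\G G$ is also a matrix over $\BB L$ whose minors are the same elements of~$\BB K$; vanishing, non-vanishing and positivity of these minors are preserved, whence $\CIS{\Gamma} = \G G$ over $\BB L$ as well. The whole content is the ``only if'' direction, which I would establish by \emph{specializing} the adjoined variables to elements of~$\BB K$. So let $\Gamma$ be principally regular over $\BB L$ with $\CIS{\Gamma} = \G G$ and, in situation~(2), positive principal minors. Its entries are rational functions over $\BB K$ in the adjoined variables. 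Assemble the finite collection of rational functions consisting of all denominators of entries of $\Gamma$, all principal minors, and those almost-principal minors $\det\Gamma_{ij|K}$ with $(ij|K) \notin \G G$. By the definition of $\CIS{\Gamma}$ each of these is a \emph{nonzero} element of $\BB L$, while every minor indexed by $\G G$ is the zero element and hence remains zero at any point where $\Gamma$ is defined. The goal is to pick a specialization point in $\BB K$ keeping every member of the collection nonzero---and every principal minor positive in situation~(2); the specialized matrix then lies over $\BB K$ and realizes~$\G G$.

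In situation~(1) this is pure genericity. Let $h \in \BB K[x_1,\dots,x_p]$ be the product of all numerators and denominators of the functions in the collection; then $h \neq 0$, and because $\BB K$ is infinite a nonzero polynomial does not vanish identically on $\BB K^p$. Choosing $a \in \BB K^p$ with $h(a)\neq 0$ makes all denominators nonzero, so $\Gamma$ specializes to a matrix over $\BB K$, and keeps all the nonzero minors nonzero while the $\G G$-minors stay zero; thus the specialized $\Gamma(a)$ is principally regular over $\BB K$ and realizes~$\G G$.

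In situation~(2) I would reduce to a single infinitesimal by induction on~$p$. Write $\BB L = \BB K'(\eps_1)$ with $\BB K' = \BB K(\eps_2,\dots,\eps_p)$ the ordered field over which $\eps_1$ is infinitesimal; it then suffices to specialize $\eps_1$ to a suitable positive element of $\BB K'$ and to iterate, after $p$ steps replacing all $\eps_k$ by elements of $\BB K$. For one infinitesimal $\eps$ over an ordered field $\BB K'$, write each relevant function as $u(\eps)/v(\eps)$ with $u,v\in\BB K'[\eps]$. Since $\eps$ is a positive infinitesimal, the $\BB L$-sign of such a function is the sign of the product of the lowest-degree coefficients of $u$ and~$v$. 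An elementary leading-term estimate---for $0<t<1$ one has $\lvert c_1 t + \dots + c_N t^N\rvert \le \bigl(\sum_i \lvert c_i\rvert\bigr)\,t$---shows that for all sufficiently small positive $t\in\BB K'$ the sign of each such function at $\eps=t$ agrees with its $\BB L$-sign, with thresholds lying in $\BB K'$. As an ordered field, $\BB K'$ has positive elements below any prescribed positive bound, so a single $t$ serves the whole finite collection, and specializing $\eps_1\mapsto t$ yields a positive realization over~$\BB K'$; the induction then delivers one over~$\BB K$.

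The main obstacle is situation~(2): genericity no longer suffices, because positivity must be preserved and the specialization point has to be produced inside the field $\BB K$, which need be neither real-closed nor Archimedean. The resolution is that the infinitesimal ordering renders the $\BB L$-sign of every relevant minor a computable leading coefficient, and that this leading behaviour is reproduced by every sufficiently small positive element of $\BB K$---of which an ordered field supplies enough to clear the finitely many thresholds.
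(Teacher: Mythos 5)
Your proposal is correct and follows essentially the same route as the paper: the easy direction by inclusion of fields, and the converse by specializing the adjoined variables --- generic evaluation over the infinite field in situation (1), and in situation (2) induction on the number of infinitesimals together with the observation that the sign of a rational function at a positive infinitesimal is governed by lowest-degree coefficients, so that all sufficiently small positive field elements preserve all (non)vanishing and positivity conditions at once. Your leading-term estimate just makes explicit what the paper calls ``an easy exercise in ordered fields,'' and your one cosmetic deviation (handling all $p$ variables simultaneously in situation (1) rather than one at a time) changes nothing of substance.
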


\begin{proof}
One inclusion is obvious by the inclusion of fields.
In the other direction, it~suffices to show how to adjoin one variable~$x$
or one infinitesimal~$\eps$, so the proof proceeds by induction on~$p$.

\begin{paraenum}
\item
Let $\BB K$ be an infinite field and $\Gamma$ principally regular over~$\BB L
= \BB K(x)$.
The~CI~structure $\CIS{\Gamma}$ is defined by vanishing and non-vanishing
constraints on principal and almost-principal minors of $\Gamma$. These are
polynomials in the entries of $\Gamma$ and therefore rational functions
over~$\BB K$. If a rational function $f \in \BB L$ is zero in $\BB L$,
then every evaluation $f(a)$ for $a \in \BB K$ is zero. Otherwise $f$ has
non-zero numerator and denominator, which are univariate polynomials over
$\BB K$. The zeros of the denominator are poles of the evaluation of~$f$
and the zeros of the numerator are the zeros of~$f$. Both zero sets are
finite. Since $\BB K$ is infinite, one can find a point $a \in \BB K$
avoiding all the undesirable poles and zeros of all principal minors and
of all non-zero almost-principal minors of $\Gamma$ simultaneously.
On this point~$a$, the evaluation $\Gamma(a)$ is a principally regular
matrix over $\BB K$ and $\CIS{\Gamma} = \CIS{\Gamma(a)}$.

\item
Suppose $\BB K$ is ordered. This implies that its characteristic is zero
and in particular that it is infinite. Let $\Gamma$ positively realize a
CI~structure over~$\BB L = \BB K(\eps)$. Again we seek a positive realization
of $\CIS{\Gamma}$ over~$\BB K$ by plugging in elements of $\BB K$ for~$\eps$.
By~the previous part of the proof, the ``algebraic part'' of positive
realizability, i.e., the vanishing and non-vanishing conditions of
almost-principal minors, is satisfied on all but finitely many points.
It remains to find infinitely many points of $\BB K$ on which all principal
minors of $\Gamma$ evaluate to positive elements of $\BB K$. By~the
hypothesis, the principal minors of $\Gamma$ are positive in the ordering
of~$\BB L$. We can assume that numerators and denominators are both positive.
By~the ordering of~$\BB L$, a~polynomial $f \in \BB K[\eps]$ is positive
if and only if its lowest-degree non-zero coefficient is positive in~$\BB K$.
It~is then an easy exercise in ordered fields to construct $a_f > 0$
in~$\BB K$, depending on the coefficients and degree of~$f$, such that every
evaluation of $f$ on the open interval $(0, a_f)$ is positive in~$\BB K$.
\linebreak
Let $a^*$ be the minimum of the finitely many $a_f$ constructed for all
the (numerators and denominators of) principal minors of~$\Gamma$.
Since $a^* > 0$, the interval $(0, a^*)$ is infinite, and all but finitely
many evaluations of $\Gamma$ on this interval yield a positive realization
of $\CIS{\Gamma}$ over~$\BB K$.
\qedhere

\end{paraenum}
\end{proof}

\begin{remark}
The proof of the first part works, moreover, for all finite fields of
sufficient size. A lower bound can be given based on the number of roots
that the univariate polynomials in question have, which can be bounded
by the size of a concrete realizing matrix and the maximal degree of
its entries.
\end{remark}

For geometric intuition, suppose that $\BB K = \BB R$ for the moment.
Inspection of the previous proof then paints the following picture of our
main construction technique: we~define a space of real matrices parametrized
by rational functions in variables $\eps_1, \dots, \eps_p$. In fact,
we can replace all infinitesimals by sufficiently large and far apart
powers of a single infinitesimal and imagine just a \emph{curve segment}
of matrices parametrized by~$\eps$.
By~the defining rational functions, we control the algebraically realized
CI~structure on this curve, and as $\eps$ tends to zero, the matrices may
approach a limit matrix whose principal regularity or positive-definiteness
carries over to them by continuity. In~this way a certificate for algebraic
or positive realizability of the CI structure on the curve over the base
field~$\BB K$ is obtained.

The appeal to continuity and limits can be avoided by an easy sufficient
condition which is the subject of the next definition and which holds for
all our later applications. Let $\BB L$ be a rational function field over
$\BB K$ as in \Cref{RationalFunctions} and $\Gamma$ a matrix over~$\BB L$.
Suppose that the denominators of all of its entries have a non-zero constant
term. Then the evaluation $\Gamma^\circ := \Gamma(0, \dots, 0)$ is a matrix
over~$\BB K$. Notice that each minor of $\Gamma^\circ$ is the constant term
of the corresponding minor of $\Gamma$ and thus principal regularity and
positive-definiteness of $\Gamma^\circ$ over $\BB K$ imply that of $\Gamma$
over $\BB L$, allowing application of \Cref{RationalFunctions}.

\begin{definition}
Let $\BB K$ be a field and $\Gamma_0$ a principally regular matrix.
A gaussoid $\G G$ is \emph{realizable near $\Gamma_0$} if there exists
$\Gamma$ over $\BB K(\eps_1, \dots, \eps_p)$ such that $\G G = \CIS{\Gamma}$
and $\Gamma^\circ = \Gamma_0$.
If $\Gamma$ can be chosen over the prime field $\BB K = \BB Q$, we add
the adverb \emph{rationally} for additional emphasis.
\end{definition}

Among all (ordered) fields of characteristic zero, \emph{rational} (positive)
realizability is the most difficult to achieve, so whenever this is possible
it is highlighted in the results below. Realizability near a positive-definite
matrix immediately implies positive realizability by \Cref{RationalFunctions}.
We now assume that $\BB K$ is infinite or ordered and apply this
\namecref{RationalFunctions} to prove general construction methods for
Gaussians which reduce the case work in~\Cref{sec:Proof}.

\begin{lemma} \label{EpsilonSum}
Let $\G G = \CIS{\Sigma}$ and $\G H = \CIS{\Gamma}$ be algebraic Gaussians
over infinite~$\BB K$ on disjoint ground sets $N$ and~$M$, respectively.
Then $\G G \cup \G H$ is an algebraic Gaussian over~$\BB K$ on the ground
set $N \cup M$. It is realizable near the block-diagonal matrix containing
$\Sigma$ and $\Gamma$ blocks.
\end{lemma}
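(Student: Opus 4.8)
The plan is to realize $\G G \cup \G H$ by the symmetric block matrix
\[
  \Omega = \begin{pmatrix} \Sigma & B \\ B^T & \Gamma \end{pmatrix}
\]
over the rational function field $\BB K(\eps)$, where the off-diagonal block $B$ is the $N \times M$ matrix with independent infinitesimals $\eps_{nm}$ ($n \in N$, $m \in M$) as entries. Then every entry of $\Omega$ is a polynomial in the $\eps_{nm}$ and the evaluation $\Omega^\circ$ at $\eps = 0$ is exactly the block-diagonal matrix with blocks $\Sigma$ and $\Gamma$. The constant term of any principal minor $\det \Omega_{K,K}$ factors as $\det \Sigma_{K \cap N} \cdot \det \Gamma_{K \cap M}$, which is non-zero because $\Sigma$ and $\Gamma$ are principally regular; hence $\Omega^\circ$ is principally regular, and by the observation preceding the definition of realizability near a matrix, so is $\Omega$ over $\BB K(\eps)$, which also licenses the application of \Cref{RationalFunctions} to descend to $\BB K$. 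It therefore remains to verify that $\CIS{\Omega} = \G G \cup \G H$.

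I would organize the CI statements $(ij|K) \in \A_{N \cup M}$ by how $ijK$ meets the two ground sets. If $ijK \subseteq N$, then the submatrix $\Omega_{iK,jK}$ only reads off the $\Sigma$-block, so $\det \Omega_{iK,jK} = \det \Sigma_{iK,jK}$ independently of $B$, and $(ij|K) \in \CIS{\Omega}$ precisely when $(ij|K) \in \G G$; the case $ijK \subseteq M$ is symmetric and reproduces $\G H$. This already identifies $\CIS{\Omega} \cap \A_N = \G G$ and $\CIS{\Omega} \cap \A_M = \G H$, so the whole content of the lemma sits in the remaining \emph{mixed} statements, those whose support $ijK$ meets both $N$ and $M$. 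For these I must show that $\det \Omega_{iK,jK}$ is a non-zero element of $\BB K(\eps)$, equivalently a non-zero polynomial in the $\eps_{nm}$, so that they fall outside $\CIS{\Omega}$. This non-vanishing is the main obstacle, precisely because at $\eps = 0$ these minors typically vanish: the block-diagonal matrix $\Omega^\circ$ satisfies \emph{all} mixed statements.

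To clear it I would exhibit, for each mixed minor, a single monomial in the $\eps_{nm}$ whose coefficient is a product of \emph{principal} minors of $\Sigma$ and $\Gamma$ and is therefore non-zero. Writing $K_N = K \cap N$ and $K_M = K \cap M$: if $i \in N$ and $j \in M$, the Leibniz term pairing row $i$ with column $j$ and every other index with itself contributes the monomial $\eps_{ij}$, and summing over all permutations that use no further off-diagonal cell shows its coefficient is $\pm \det \Sigma_{K_N} \cdot \det \Gamma_{K_M}$. If instead $ij \subseteq N$ but $K_M \neq \emptyset$ (the case $ij \subseteq M$, $K_N \neq \emptyset$ being symmetric), I pick any $m \in K_M$ and route through it: the term pairing row $i$ with column $m$, row $m$ with column $j$, and all other indices with themselves contributes $\eps_{im}\eps_{jm}$ with coefficient $\pm \det \Sigma_{K_N} \cdot \det \Gamma_{K_M \setminus m}$. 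In both cases the coefficient is a non-zero product of principal minors, which stay non-zero even when the almost-principal minor $\det \Sigma_{iK_N,jK_N}$ that governed the $\eps = 0$ value vanishes; and because $i \notin C$ and $j \notin R$, each indeterminate $\eps_{ij}$, $\eps_{im}$, $\eps_{jm}$ occupies only one admissible cell of the submatrix, so no cancellation can destroy the exhibited coefficient. With every mixed minor thus a non-zero polynomial, $\CIS{\Omega} = \G G \cup \G H$ follows, establishing realizability near the block-diagonal matrix; the algebraic Gaussian claim over $\BB K$ is then immediate from \Cref{RationalFunctions} together with Matúš's corollary, by which any principally regular realization yields a gaussoid.
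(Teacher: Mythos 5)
Your proposal is correct and takes essentially the same approach as the paper: the same block matrix with independent infinitesimal entries in the off-diagonal block, the same reduction to the mixed statements (the restricted ones reproducing $\G G$ and $\G H$), and even the same witness monomials $\eps_{ij}$ and $\eps_{im}\eps_{jm}$. The only cosmetic difference is that the paper extracts the coefficient of $\eps_{im}\eps_{jm}$ after a Schur-complement factorization $\det \Phi_{ij|K} = \det(\Gamma_{M'}) \det(\Sigma_{ij|N'} - \cdots)$, obtaining $\pm(\Gamma_{M'}^{-1})_{mm}\det\Sigma_{N'}$, which by Cramer's rule agrees with your direct Leibniz coefficient $\pm\det\Sigma_{K_N}\det\Gamma_{K_M\setminus m}$ once the factor $\det\Gamma_{M'}$ is accounted for.
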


\begin{proof}
Define an $(NM \times NM)$-matrix
\begin{equation*}
  \Phi = \begin{pmatrix}
    \Sigma & \bm \eps \\
    \bm \eps^T & \Gamma
  \end{pmatrix},
\end{equation*}
where $\Sigma$ sits in the $N \times N$ block, $\Gamma$ in the $M \times M$
block and $\bm\eps = (\eps_{ij})_{ij \in N \times M}$ consists of independent
variables. Obviously $\Phi^\circ$ is the block-diagonal matrix from the
statement of the \namecref{EpsilonSum} and $\Phi$ has coefficients in~$\BB K$.
The~matrix $\Phi$ defines a realizable gaussoid $\CIS{\Phi}$ which restricts
to~$\G G$ on~$N$ and to~$\G H$ on~$M$. It remains to see that $\CIS{\Phi}$ contains
no other CI statement $(ij|K)$ where we decompose $K = N'M'$ with $N' \subseteq N$,
$M' \subseteq M$. We apply \Cref{RationalFunctions} and show that $\det \Phi_{ij|K}$
vanishes in $\BB K(\eps_{ij})$ only if $\G G$ or $\G H$ mandate it.

First assume $ij \subseteq N$ and $M' \not= \emptyset$ (as $M' = \emptyset$
yields that $\det \Phi_{ij|K} = \det \Sigma_{ij|N'}$, whose vanishing
depends only on $\G G$). The almost-principal submatrix $\Phi_{ij|K}$ is
written with rows labeled in order by $iN'M'$ and columns $jN'M'$:
\begin{align*}
  \det \Phi_{ij|K} &= \det \begin{pmatrix}
    \mbox{\normalfont\LARGE $\Sigma_{ij|N'}$} & \begin{matrix}
      \bm\eps_{i,M'} \\ \bm\eps_{N',M'}
    \end{matrix} \\
    \begin{matrix}
      \bm\eps_{M',j} & \bm\eps_{M',N'}
    \end{matrix} & \Gamma_{M'}
  \end{pmatrix} \\ &=
  \det\left(\Gamma_{M'}\right)
  \det\left(
    \Sigma_{ij|N'} - \begin{pmatrix} \bm\eps_{i,M'} \\ \bm\eps_{N',M'} \end{pmatrix}
      \Gamma_{M'}^{-1}
      \begin{pmatrix} \bm\eps_{M',j} & \bm\eps_{M',N'} \end{pmatrix}
  \right).
\end{align*}
The first determinant is a principal minor of $\Gamma$ and hence non-zero.
It suffices to show that the determinant of the Schur complement expression
is not the zero polynomial. For row $a \in iN'$ and column $b \in jN'$
the corresponding entry of the Schur complement is
\[
  \Sigma_{ab} - \sum_{k,l \in M'} (\Gamma_{M'}^{-1})_{kl} \eps_{al}\eps_{bk}
\]
and hence by the Leibniz formula the determinant equals
\[
  \sum_{\substack{\sigma: iN' \to jN' \\ \text{bijective}}}
  \sgn(\sigma) \prod_{a \in iN'} \left(
    \Sigma_{a\sigma(a)} - \sum_{k,l \in M'} (\Gamma_{M'}^{-1})_{kl} \eps_{al}\eps_{\sigma(a)k}
  \right).
\]
By assumption there exists $m \in M'$. In this multivariate polynomial
the coefficient of $\eps_{im}\eps_{jm}$ is
\begin{align*}
  \pm \sum_{\sigma \in \Sym_{N'}} \sgn(\sigma) (\Gamma_{M'}^{-1})_{mm}
    \prod_{a \in N'} \Sigma_{a\sigma(a)}
  &= \pm (\Gamma_{M'}^{-1})_{mm} \det \Sigma_{N'} \not= 0,
\end{align*}
which shows that $(ij|K) \not\in \CIS{\Phi}$ by \Cref{RationalFunctions} in
case $ij \subseteq N$ and $M' \not= \emptyset$. The~same proof applies to the
symmetric case with $N$ and $M$ exchanged.

The remaining case has $i \in N$ and $j \in M$. Then, by Laplace expansion
\begin{align*}
  \det \Phi_{ij|K} = \det \begin{pmatrix}
    \eps_{ij} & \cdots \\
    \vdots & \Phi_K
  \end{pmatrix} =
  \eps_{ij} \det \Phi_K \mp \dots,
\end{align*}
where $\det \Phi_K$ is non-zero and all other terms do not involve the
variable $\eps_{ij}$.
\end{proof}

\begin{lemma} \label{Embedding}
Let $\BB K$ be infinite and $\G G = \CIS{\Gamma}$ an algebraic Gaussian
over~$\BB K$ on~$N$. If~$L \cap N = \emptyset$, then ${\{ (ij|K) \in \A_{NL} :
(ij|K) \in \G G \}}$ and ${\{ (ij|KL) \in \A_{NL} : (ij|K) \in \G G \}}$ are
both algebraic Gaussians over~$\BB K$ on $NL$. They are realizable near
every block-diagonal matrix whose $N$-block is $\Gamma$ and whose $L$-block
is principally regular.
\end{lemma}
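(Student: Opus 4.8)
The plan is to treat the two claimed structures separately and then connect them through the hyperoctahedral symmetry. Write $\G G_1 = \{(ij|K)\in\A_{NL} : (ij|K)\in\G G\}$ for the first structure and $\G G_2 = \{(ij|KL):(ij|K)\in\G G\}$ for the second. The key observation is that $\G G_2$ is exactly the swap image $\G G_1^L$ of $\G G_1$ under $Z=L$: since $L\cap N=\emptyset$ and each pair $ij$ occurring in $\G G_1$ lies inside $N$, we have $(ij|K)^L=(ij|K\oplus L)=(ij|KL)$. By \Cref{HypOctMinors} the operators $\G S_L^\delta$ preserve algebraic realizability and transform the CI structure in a controlled way, $\CIS{\G S_L^\delta(\Phi)}=\CIS{\Phi}^L$. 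So once $\G G_1$ is realized by a principally regular matrix it is automatically an algebraic Gaussian, and applying a suitable swap will hand us $\G G_2$. I would therefore realize $\G G_1$ first and transfer to $\G G_2$ afterwards.

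To realize $\G G_1$ near a block-diagonal $\Gamma\oplus\Delta$ with $\Delta$ principally regular on $L$, the plan is to mimic the coupling construction of \Cref{EpsilonSum}, with one extra ingredient: the $L$-block must realize the \emph{empty} gaussoid, whereas the given $\Delta$ generally does not. The remedy is to perturb $\Delta$ into $\Delta+\bm\eta$, where $\bm\eta$ is a symmetric matrix of independent infinitesimals supported off the diagonal. For any $L$-internal statement $(kl|M)$, the coefficient of $\eta_{kl}$ in $\det(\Delta+\bm\eta)_{kM,lM}$ has constant term $\pm\det\Delta_M\neq 0$, so this minor is a non-zero rational function; hence $\CIS{\Delta+\bm\eta}=\emptyset$ while $(\Delta+\bm\eta)^\circ=\Delta$ remains the limit. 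In other words, the empty gaussoid is realizable near every principally regular $\Delta$. Feeding $\Gamma$ (realizing $\G G$) and $\Delta+\bm\eta$ (realizing $\emptyset$) into \Cref{EpsilonSum}, now over the infinite field $\BB K(\bm\eta)$, produces a matrix $\Phi$ with cross-infinitesimals $\bm\eps$ whose CI structure is $\G G\cup\emptyset=\G G_1$ and whose limit, after also sending $\bm\eta\to 0$, is $\Gamma\oplus\Delta$; \Cref{RationalFunctions} certifies that the CI structure is unchanged by this composite limit.

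For $\G G_2$ I would apply $\G S_L^\delta$ to the realization of $\G G_1$. A short computation with $\G S_L^\delta(\Gamma_0)=(A+\Gamma_0 C)^{-1}(B+\Gamma_0 D)$ shows that on a block-diagonal matrix the $N$-block is merely sign-conjugated to $\delta_N\Gamma\delta_N$ while the $L$-block is sent to $-\delta_L\Delta^{-1}\delta_L$; that is, the operator partially inverts the $L$-block. Taking $\delta\equiv 1$ keeps the $N$-block equal to $\Gamma$ and sends $\Delta$ to $-\Delta^{-1}$, so to reach a prescribed principally regular target $\Delta'$ on $L$ it suffices to run the $\G G_1$-construction with $\Delta:=-(\Delta')^{-1}$, which is again principally regular. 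Applying $\G S_L^{1}$ to the resulting $\Phi$ yields $\Psi$ with $\CIS{\Psi}=\CIS{\Phi}^L=\G G_2$. The swap commutes with the limit $\circ$ because the only denominator it introduces is a power of $\det(A+\Phi C)$, whose constant term equals $\det\Delta\neq 0$; hence $\Psi^\circ=\G S_L^{1}(\Gamma\oplus\Delta)=\Gamma\oplus\Delta'$, establishing realizability of $\G G_2$ near every block-diagonal matrix with principally regular $L$-block.

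The main obstacle is the non-vanishing of the \emph{cross} almost-principal minors: those whose pair mixes an index of $N$ with one of $L$, and those whose conditioning set genuinely meets $L$. Showing that each such minor survives as a non-zero rational function — by isolating a monomial in the $\bm\eps$ whose coefficient is built from a non-vanishing principal minor of $\Gamma$ or $\Delta$ — is precisely the Leibniz-expansion calculation carried out in \Cref{EpsilonSum}, which I would import rather than repeat. The genuinely new points requiring care are the off-diagonal perturbation $\bm\eta$ that destroys the $L$-internal gaussoid while fixing the limit $\Delta$, and the verification that the swap transfer for $\G G_2$ lands near a block-diagonal matrix with exactly the prescribed principally regular $L$-block; both reduce to the constant-term bookkeeping sketched above.
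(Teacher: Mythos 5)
Your proof is correct, and its first half is the paper's own argument: the paper also obtains the marginal embedding ${\{ (ij|K) \in \A_{NL} : (ij|K) \in \G G \}}$ from \Cref{EpsilonSum} together with the observation that the empty gaussoid on $L$ is realizable near every principally regular matrix by perturbing each entry with an independent infinitesimal --- your $\Delta+\bm\eta$ cofactor argument just makes that observation explicit. The second half differs in implementation, though not in substance. The paper transfers to the conditional embedding by duality: it realizes $\G G^*=\CIS{\Gamma^{-1}}$ on $N$, embeds it by the first part near $\Gamma^{-1}\oplus(\Delta')^{-1}$, and inverts the resulting $NL$-matrix, so that $(ij|K)\in\G G\Leftrightarrow(ij|N\setminus ijK)\in\G G^*\Leftrightarrow(ij|KL)\in\ol{\G G^*}^*$. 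You instead hit the marginal realization with the single partial swap $\G S_L^1$ of \Cref{HypOctMinors}, after pre-adjusting the $L$-block to $-(\Delta')^{-1}$. On CI structures these are the same symmetry --- the swap by $N$ followed by the swap by $NL$ composes to your swap by $L$ --- so the underlying idea is shared; what changes is the bookkeeping. The paper needs only the classical facts that $\Gamma^{-1}$ realizes the dual gaussoid and that inversion commutes with the evaluation $\Gamma\mapsto\Gamma^\circ$ (the denominator $\det\Phi$ has non-zero constant term), whereas you need the block identity $\G S_L^1(\Gamma\oplus\Delta)=\Gamma\oplus(-\Delta^{-1})$ and the commutation of $\G S_L^1$ with $\circ$ via the denominator $\det\Phi_L$; you verify both correctly. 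Your route is slightly more direct --- one application of the general swap machinery instead of two dualities sandwiching an embedding of the dual --- at the cost of invoking the full strength of \Cref{HypOctMinors}, while the paper's version stays within the more familiar inversion/duality toolkit.
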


\begin{proof}
The first assertion follows directly from \Cref{EpsilonSum} using that
the empty gaussoid on $L$ is realizable near every principally regular
matrix, by introducing independent variables for each entry. To show
the second assertion, we make use of duality: if $\G G = \CIS{\Gamma}$
for $\Gamma$ principally regular, then $\Gamma^{-1}$ is principally
regular and realizes the dual gaussoid $\CIS{\Gamma^{-1}} = \G G^* =
\{ (ij|N \setminus ijK) : (ij|K) \in \G G \}$; cf.~\cite{LnenickaMatus,Geometry}.
Thus we may take $\G G^*$ over $N$ and embed it into $NL$ by the first
part of the proof, denoting the result by $\ol{\G G^*}$, and take its
dual over $NL$. All these operations preserve algebraic realizability
near a chosen block-diagonal matrix and we get
\[
  (ij|K) \in \G G \; \Leftrightarrow \; (ij|N \setminus ijK) \in \G G^*
  \; \Leftrightarrow \; (ij|NL \setminus ijKL) \in \ol{\G G^*}
  \; \Leftrightarrow \; (ij|KL) \in \ol{\G G^*}^*.
\]
No other CI statements over $NL$ arise because duality and embedding
preserve also cardinality.
\end{proof}

\begin{remark}
The ``dependent sum'' construction from \Cref{EpsilonSum} can be seen as
a perturbation of the \emph{direct sum} of semigraphoids~\cite{MatusMatroids,MatusClassification}
which is recovered in the limit $\eps_{ij} = 0$. Both constructions preserve
the properties of being a gaussoid and being a realizable gaussoid for
all the various notions of ``realizable'' covered in the above statements.
The dependent sum yields the most generic gaussoid on $NM$ which restricts
to its summands on $N$ and $M$, in that it satisfies no additional
relations. For~realizable gaussoids, this corresponds to the ``most
dependent'' joint Gaussian.
Similarly, \Cref{Embedding} shows that marginalization and conditioning
of realizable gaussoids from $NL$ to $N$ can be inverted generically,
inducing no further independencies over~$NL$.
\end{remark}

It easily follows from \Cref{HypOctMinors} that the quotient action
$\G S_Z$ on the $(\BB Z/4)^N$-orbit of the identity matrix, where
components $X_i$ with different signs $\delta_i$ are identified,
produces well-defined matrices, independent of the choice $\delta_i$
of representatives. This orbit consists of all $2^n$ diagonal matrices
with entries $(\pm 1, \dots, \pm 1)$. For any matrix~$J$ in this orbit
the action $\G S_Z(J)$ flips the signs of the diagonal entries of~$J$
indicated by~$Z$. The action of $\Sym_N$ does not leave this set of
matrices either, so it constitutes an orbit under the hyperoctahedral
group $\HypOct_N$.

Realizability near the identity matrix or its hyperoctahedral images
is a well-behaved notion in the theory of CI~structures: take a principally
regular matrix $\Gamma$ over $\BB K(\eps_1, \dots, \eps_p)$ with
$\Gamma^\circ$ in this orbit. By~\Cref{HypOctMinors}, the hyperoctahedral
action produces a principally regular matrix~$\Delta$ over the same field
such that $\Delta\!^\circ$ belongs to the hyperoctahedral orbit of the
identity as well. The~dependent sum in \Cref{EpsilonSum}, duality,
marginalization and conditioning and their reversals in \Cref{Embedding}
of algebraic Gaussians preserve realizability near a hyperoctahedral image
of the identity over their respective ground~sets. These facts and the
following \namecref{MainTool} are the main realizability tools for the
most complicated cases in the next section.

\begin{proposition} \label{MainTool}
If a gaussoid $\G G$ is (rationally) realizable near every one of
the $2^n$ hyperoctahedral images of the identity matrix, then every
hyperoctahedral image of $\G G$ is (rationally) near-identity
realizable, in particular positively realizable.
\end{proposition}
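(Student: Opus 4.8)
The plan is to leverage the hypothesis at \emph{all} $2^n$ sign-diagonal matrices so as to select, for each target image $\G G^g$, precisely the diagonal matrix from which the hyperoctahedral action transports a realization onto the identity. Fix an arbitrary $g \in \HypOct_N$. Since the $2^n$ sign-diagonal matrices form a single $\HypOct_N$-orbit --- the orbit of the identity $I$, as recorded just before the statement --- the preimage $J := g^{-1}(I)$ under the (quotient) matrix action of $g$ is again one of these sign-diagonal matrices. The hypothesis therefore supplies a matrix $\Gamma$ over $\BB K(\eps_1, \dots, \eps_p)$ --- over $\BB Q(\eps_1, \dots, \eps_p)$ in the rational case --- with $\CIS{\Gamma} = \G G$ and $\Gamma^\circ = J$.

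Next I would transport this witness by the matrix action of $g$. Decomposing $g$ into a swap part $Z \subseteq N$ and a permutation $\pi \in \Sym_N$, and fixing any representative sign vector $\delta$ (the choice will turn out immaterial), I set $\Gamma' := \pi(\G S_Z^\delta(\Gamma))$. By \Cref{HypOctMinors} together with the $\Sym_N$-action by simultaneous permutation of rows and columns, $\Gamma'$ is principally regular over the same field and $\CIS{\Gamma'} = \CIS{\Gamma}^g = \G G^g$, independently of $\delta$. All maps involved have entries in $\{0, \pm 1\}$, so $\Gamma'$ stays over $\BB Q(\eps_1, \dots, \eps_p)$ in the rational case.

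The crux is to verify that forming the constant term commutes with the action, i.e. $(\Gamma')^\circ = g(J) = I$. This is exactly where principal regularity of $J$ enters: in $\G S_Z^\delta(\Gamma) = (A + \Gamma C)^{-1}(B + \Gamma D)$ the only denominator is $\det(A + \Gamma C) = \prod_k \delta_k \cdot \det \Gamma_Z$, whose constant term is $\prod_k \delta_k \cdot \det J_Z = \pm 1 \ne 0$; hence the inverse specializes to the inverse of the specialization, and evaluating $\G S_Z^\delta(\Gamma)$ at $\eps = 0$ returns $\G S_Z^\delta(J)$. On the orbit of $I$ the sign $\delta$ drops out, giving $\G S_Z^\delta(J) = \G S_Z(J)$, and the permutation part manifestly commutes with evaluation, so $(\Gamma')^\circ = \pi(\G S_Z(J)) = g(J) = I$. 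I expect this specialization step to be the only genuine obstacle; everything else is bookkeeping of the $\HypOct_N$ action already carried out in \Cref{HypOctMinors} and in the paragraph preceding the statement.

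Finally, $\Gamma'$ certifies that $\G G^g$ is (rationally) realizable near the identity. As $I$ is positive-definite, each principal minor of $\Gamma'$ has constant term equal to a positive principal minor of $I$, hence is positive in the ordering of $\BB K(\eps_1, \dots, \eps_p)$; thus $\Gamma'$ is positively realizing and \Cref{RationalFunctions} delivers a positive realization over the ordered base field, and a genuinely rational positive-definite one over $\BB Q$ whenever $\G G$ was rationally realizable. Since $g$ was arbitrary, every hyperoctahedral image of $\G G$ is near-identity and positively realizable, as claimed.
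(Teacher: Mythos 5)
Your proof is correct and takes essentially the same route as the paper's: pull the identity back to the sign-diagonal matrix $J = g^{-1}(I)$, invoke the hypothesis to realize $\G G$ near $J$, and transport the resulting curve of realizations forward by $g$, which does not change the field and hence preserves rationality and yields positive realizability via \Cref{RationalFunctions}. Your explicit verification that specialization at $\eps = 0$ commutes with the action $\G S_Z^\delta$ (because $\det(A+\Gamma C)$ has constant term $\pm 1$) fills in a step the paper's four-sentence proof leaves implicit, but it is the same argument.
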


\begin{proof}
Let $\G H$ be in $\G G$'s hyperoctahedral orbit, arising from $\G G$
by a swap and a permutation. $\G H$~is realizable near the identity
if and only if $\G G$ is realizable near the matrix which is obtained
from the identity by permuting and swapping in reverse. These operations
result in a hyperoctahedral image of the identity near which $\G G$
is realizable by assumption. The hyperoctahedral action which transports
this curve of realizations back to realize $\G H$ near the identity
does not change the field, so rationality is preserved.
\end{proof}

\section{Proof of the main theorem}
\label{sec:Proof}

In this section a proof of our main result~\Cref{MainThm} is delivered in
the form of a series of lemmas which each settle one type of minimal
gaussoid extension of at most two CI statements. In each case we construct
a rational near-identity realization.

\begin{lemma} \label{AtmostOne}
All CI structures with at most one element are rationally realizable near
the identity.
\end{lemma}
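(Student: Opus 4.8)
The plan is to handle the two cases---the empty structure and a single CI statement---separately, using the tools of the previous two sections. The empty structure is immediate: over the identity matrix $I_N$ I would introduce an independent infinitesimal $\eps_{ij}$ for each off-diagonal entry. The resulting matrix $\Gamma$ over $\BB Q(\eps_{ij})$ satisfies $\Gamma^\circ = I_N$, and every almost-principal minor is then a non-zero polynomial in the $\eps_{ij}$, so $\CIS{\Gamma} = \emptyset$. This is exactly the observation already used in the proof of \Cref{Embedding}, and it realizes the empty gaussoid rationally near the identity.

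For a single CI statement I would avoid building each realization by hand and instead exploit the hyperoctahedral symmetry. Each singleton $\{(ij|K)\}$ is a gaussoid, since no gaussoid axiom can fire when only one antecedent statement is present, and it lies in the hyperoctahedral orbit of the marginal $\{(12|)\}$: the swap by $Z = K$ sends $(12|)$ to $(12|K)$, and a subsequent permutation sends this to $(ij|K)$. By \Cref{MainTool} it therefore suffices to realize the single structure $\{(12|)\}$ rationally near every one of the $2^n$ hyperoctahedral images of the identity, that is, near every diagonal matrix $J$ with entries $\pm 1$.

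To realize $\{(12|)\}$ near a fixed such $J$, I would split the ground set as $\{1,2\} \cup L$. On $\{1,2\}$ the diagonal matrix $\begin{psmallmatrix} J_{11} & 0 \\ 0 & J_{22} \end{psmallmatrix}$ is principally regular, equals its own constant term, and realizes exactly $\{(12|)\}$ because its off-diagonal entry vanishes. Feeding this into the marginalization embedding of \Cref{Embedding}, with $L$-block the principally regular diagonal $\pm 1$ matrix cut from $J$, yields a rational near-realization of $\{(12|)\}$ over $\{1,2\}L$ whose constant term is the block-diagonal matrix $J$; rationality is preserved throughout. Applying \Cref{MainTool} then gives a rational near-identity realization of every hyperoctahedral image of $\{(12|)\}$, i.e.\ of every singleton CI structure, which together with the empty case proves the \namecref{AtmostOne}. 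The only point requiring care---and the reason for using the embedding lemma rather than directly perturbing all off-diagonal entries of $J$ except the $(1,2)$ one---is that \Cref{MainTool} demands a realization near all $2^n$ sign-diagonals, and that a direct perturbation would oblige me to verify that no other almost-principal minor degenerates to the zero polynomial; \Cref{Embedding} disposes of both concerns at once.
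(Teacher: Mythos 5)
Your proof is correct, but for the singleton case it takes a genuinely different route than the paper. The paper never invokes \Cref{MainTool} here: it emulates the hyperoctahedral orbit of singletons by operations that preserve rational near-identity realizability directly --- permute $(ij|K)$ to $(12|K')$, marginalize to the ground set $12K'$, and condition on $K'$ --- so that both assertions of \Cref{Embedding} (the conditioning half of which rests on duality) transport the trivial realization of $\{(12|)\}$ on the two-element ground set back to a near-identity realization of $\{(ij|K)\}$ on the full ground set. You instead handle the conditioning set by the raw swap action on matrices, which forces you to produce realizations of $\{(12|)\}$ near all $2^n$ diagonal $\pm 1$ matrices (via the marginalization half of \Cref{Embedding}, with the correct observation that the two diagonal blocks cut from such a matrix serve as the $N$- and $L$-blocks) and then to invoke \Cref{MainTool}. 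That is precisely the strategy the paper reserves for the harder two-element gaussoids in \Cref{ijik,ijij}, so your argument has the virtue of being uniform with those cases and of needing only one orbit representative; the paper's version is more economical for singletons, avoiding \Cref{MainTool} and any realization away from the identity altogether, and it shows in passing that on singletons the hyperoctahedral action can be simulated by positivity-preserving operations. One cosmetic slip in your orbit decomposition: the swap by $Z = K$ sends $(12|)$ to $(12|K \setminus 12)$, which equals the claimed $(12|K)$ only when $K$ is disjoint from $\{1,2\}$; since $K$ need not be, you should permute first and then swap (or swap by the permutation preimage of $K$). The orbit claim itself is of course true, and your treatment of the empty structure coincides with the paper's.
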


\begin{proof}
The empty structure is realized by a symmetric matrix with $1$-diagonal
and independent variables in the off-diagonal entries. Clearly, none of
the almost-principal minors of this matrix vanish as polynomials.
Every singleton subset of $\A_N$ is vacuously a gaussoid. The singleton
gaussoids form a single orbit under the action of the hyperoctahedral group.
While this action does not in general preserve positive realizability,
we can emulate it using \Cref{Embedding} in a way that shows that it~\emph{is}
preserved in this case. Given any singleton~$(ij|K)$, first permute it to
$(12|K')$, marginalize to $12K'$ and then contract $K'$ to arrive at the
singleton~$(12|)$ over the ground set $N = 12$. These transformations
preserve rational positive realizability and their inverses do as well.
Thus we can transform every singleton into every other singleton while
preserving realizability and it remains to see that $\{ (12|) \}$ is
rationally positively realizable, which is trivial.
\end{proof}

From now on we consider two-element sets $\{ (ij|N), (kl|M) \}$ and their
minimal gaussoid extensions. Using the fact that marginalizations and
conditionings of Gaussians are Gaussians (over the same field) and that
we can undo these operations generically via \Cref{Embedding}, we can
assume that we work over the ground set $ijklNM$ and that $N \cap M =
\emptyset$. 

The gaussoid axioms have two antecedents. Every antecedent set of a gaussoid
axiom is therefore not a gaussoid. The following lemma deals with this type:

\begin{lemma}
If $\G B = \{ (ij|N), (kl|M) \}$ is not a gaussoid, then each of its
minimal gaussoid extensions has cardinality four and is rationally
realizable near the identity.
\end{lemma}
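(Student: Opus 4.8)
First I would pin down the extensions combinatorially. Since $\G B$ has two elements and is not a gaussoid, some instance of \eqref{eq:G1}, \eqref{eq:G2} or \eqref{eq:G4} has both of its antecedents in $\G B$, so these antecedents are exactly the two members of $\G B$; in particular the two pairs occurring in $\G B$ share an index, and after a permutation I may take the active indices to be $1,2,3$ with a common conditioning part $L$. If $\G B$ is either antecedent set of the intersection/composition axiom \eqref{eq:G2}, namely $\{(12|L),(13|L)\}$ or $\{(12|3L),(13|2L)\}$, or a semigraphoid antecedent $\{(12|L),(13|2L)\}$ of \eqref{eq:G1}, then the corresponding biconditional deterministically forces the two missing statements, and the unique minimal extension is
\[
  \G G_4 := \{\,(12|L),\ (12|3L),\ (13|L),\ (13|2L)\,\}.
\]
If instead $\G B=\{(12|L),(12|3L)\}$ is the weak-transitivity antecedent of \eqref{eq:G4}, the consequent is the disjunction $(13|L)\vee(23|L)$, giving two candidate extensions. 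The point here is that neither candidate is closed as a three-element set: adjoining $(13|L)$ lets \eqref{eq:G1} act on $(13|L)\wedge(12|3L)$ and force $(13|2L)$, so the closure is again $\G G_4$; adjoining $(23|L)$ symmetrically yields the image of $\G G_4$ under the transposition $(1\,2)\in\Sym_N$. A routine check of the gaussoid axioms confirms that $\G G_4$ is itself a gaussoid. Hence in every case each minimal gaussoid extension has cardinality four and lies in the single $\HypOct_N$-orbit of $\G G_4$.

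Then I would realize $\G G_4$. By \Cref{MainTool} it suffices to realize $\G G_4$ rationally near every hyperoctahedral image of the identity, and by the marginalization and conditioning reversals of \Cref{Embedding} --- which preserve realizability near a hyperoctahedral image of the identity --- the inert ground-set elements and the common conditioning set $L$ may be peeled off, reducing the task to the three-element gaussoid $\G G_4^{\circ}:=\{(12|),(12|3),(13|),(13|2)\}$ over $N=123$. For arbitrary signs $s_1,s_2,s_3\in\{\pm1\}$ I take the rational matrix
\[
  \Gamma = \begin{pmatrix} s_1 & 0 & 0 \\ 0 & s_2 & \eps \\ 0 & \eps & s_3 \end{pmatrix}
\]
over $\BB Q(\eps)$. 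Its constant term is $\Gamma^{\circ}=\mathrm{diag}(s_1,s_2,s_3)$, whose principal minors are all $\pm1$, so by the constant-term criterion $\Gamma$ is principally regular; and $\CIS{\Gamma}=\G G_4^{\circ}$ because the only vanishing off-diagonal entries sit in positions $(12)$ and $(13)$, which makes precisely the four minors of $\G G_4^{\circ}$ vanish while $\det\Gamma_{23|}=\eps$ and $\det\Gamma_{23|1}=s_1\eps$ stay non-zero. Thus $\G G_4^{\circ}$ is rationally realizable near every sign-diagonal matrix; re-adjoining $L$ and the inert elements through \Cref{Embedding} carries this to $\G G_4$ near every hyperoctahedral image of the identity, and \Cref{MainTool} then returns a rational near-identity realization of every image of $\G G_4$, that is, of every minimal gaussoid extension of $\G B$.

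Finally, the only genuinely delicate point is the combinatorial one: the weak-transitivity extensions appear to have three elements but are pushed up to four by the semigraphoid axiom \eqref{eq:G1}, which is also exactly what makes all of the antecedent types collapse into the single orbit of $\G G_4$. Once this collapse is recognised, the realization is immediate --- a block-diagonal matrix with a single infinitesimal off-diagonal entry --- and the remaining work is the routine bookkeeping of \Cref{Embedding} and \Cref{MainTool}.
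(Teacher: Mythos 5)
Your proof is correct, and its skeleton is the same as the paper's: observe that a non-gaussoid pair must be the antecedent set of a gaussoid axiom and hence lives in a single 3-face of the cube, reduce to a three-element ground set, produce rational realizations near every sign-diagonal matrix there, and transport them back with \Cref{Embedding} and \Cref{MainTool}. The difference is in how the three-element case is discharged. The paper quotes the classification of $3$-gaussoids and the rational near-identity realizations of \cite[Theorem~1]{LnenickaMatus} wholesale, which also yields the cardinality-four claim; you instead pin down combinatorially that every minimal extension is, up to relabeling, the single gaussoid $\G G_4 = \{(12|L),(12|3L),(13|L),(13|2L)\}$ --- correctly noting that the two weak-transitivity candidates of size three are pushed up to size four by the semigraphoid axiom \eqref{eq:G1} --- and then realize its core $\{(12|),(12|3),(13|),(13|2)\}$ by the explicit matrix with zero entries in positions $12$ and $13$ and an infinitesimal $\eps$ in position $23$. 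Your verification is sound: all principal minors of that matrix have constant term $\pm 1$, the four minors of $\G G_4$ vanish identically, and the two surviving almost-principal minors are $\eps$ and $s_1\eps$. What your route buys is self-containedness (both the cardinality statement and the realization are proved rather than cited); what the paper's citation buys is brevity and no case analysis. One small remark: since all minimal extensions arising here are mere permutation images of $\G G_4$ and your construction already accommodates every sign pattern and labeling, the final appeal to \Cref{MainTool} is not strictly necessary, though it is a clean way to package the bookkeeping.
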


\begin{proof}
$\G B$ not being a gaussoid requires that $(ij|N)$ and $(kl|M)$ are
distinct and form the antecedent set of a gaussoid axiom. Thus the
two CI statements lie in a 3-face of the ambient $ijklNM$-cube;
see~\cite{ConstructionMethods}.
We can therefore reduce the study of gaussoid extensions of $\G B$
to this 3-face and hence, after conditioning, to a 3-element ground~set.
Every gaussoid closure of $\G B$ is thus a 3-gaussoid which is
placed in a 3-face of the $ijklNM$-cube. With two generators, each
closure has exactly four elements. The 3-gaussoids are all realizable
as undirected graphical models or their duals. Rational near-identity
realizations have been constructed in~\cite[Theorem~1]{LnenickaMatus}
and those are embedded back into the $ijklNM$-cube via~\Cref{Embedding}.
\end{proof}

The remaining type of gaussoids is comprised of pairs of so-called
\emph{inferenceless} generators with respect to the gaussoid axioms:
two-element subsets of $\A_N$ which are vacuously gaussoids.
We~expect this type to be the hardest to realize. The gaussoid axioms,
as the previous proof shows, govern inferences of two CI statements
in a common 3-face of the hypercube.
The realizabilities of inferenceless pairs prove that there are no
valid two-antecedental inference rules for Gaussian CI whose antecedents
lie further apart in the hypercube than in a common 3-face.

We continue to assume that the ground set is $ijklNM$ and that
$N \cap M = \emptyset$. In addition, the assumption of inferenceless
generators can be expressed as
\begin{align*}
\label{eq:Inf'less} \tag{$\dagger$}
|N \oplus M| \ge |ij \cap kl|.
\end{align*}
This type splits into a number of cases depending on how ``entangled''
$ij$, $kl$, $N$ and $M$ are, as these entanglements influence the form
of a potential realizing matrix. Up to the group $\BB Z/2 \times \Sym_N$
of duality and isomorphy, which preserves rational positive realizability,
and symmetries in the roles of $ij$ and $kl$, there are seven cases:

\begin{center}
\begin{tabular}{c||c|c|c|c|c|c|c}
& 1 & 2 & 3 & 4 & 5 & 6 & 7 \\ \hline
$ij \cap kl$ & $\emptyset$ & $\emptyset$ & $\emptyset$ & $\emptyset$ & $i$ & $i$ & $ij$ \\
$ij \cap M$  & $\emptyset$ & $i$ & $i$ & $ij$ & $\emptyset$ & $j$ & $\emptyset$ \\
$kl \cap N$  & $\emptyset$ & $\emptyset$ & $k$ & $\emptyset$ & $\emptyset$ & $\emptyset$ & $\emptyset$ \\
\end{tabular}
\end{center}

The hyperoctahedral group has a considerably wider reach. Every gaussoid
$\{ (ij|N), (kl|M) \}$ can be transformed into $\{ (ij|), (kl|M') \}$,
where $ij \cap M' = \emptyset$, by swapping out $N \cup (M \cap ij)$.
This action reduces the seven cases in the table above to only three cases:
\begin{itemize}
\item $\{ (ij|), (kl|M) \}$ on $ijklM$ with $ij \cap klM = \emptyset$,
\item $\{ (ij|), (ik|M) \}$ on $ijkM$ with $ij \cap kM = \emptyset$,
\item $\{ (ij|), (ij|M) \}$ on $ijM$ with $ij \cap M = \emptyset$.
\end{itemize}
Not only does $\HypOct_N$ decrease the number of cases, it also allows
to pick simpler representatives of each case. The three representatives
displayed above all contain the statement $(ij|)$, which only mandates
that a specific \emph{entry} of the realizing matrix be zero.
This reduction comes at the cost of not preserving positive
realizability. Using \Cref{MainTool}, to obtain rational positive
realizability of the entire orbit, we realize the above three case
representatives rationally near all the matrices which are equivalent
to the identity under the hyperoctahedral action.

The first case is the union of gaussoids $\{ (ij|) \}$ and $\{ (kl|M) \}$
over disjoint ground sets $ij$ and $klM$ and is already settled as an
instance of \Cref{EpsilonSum} with the rational near-identity
realizations constructed for singleton gaussoids in \Cref{AtmostOne}.
The remaining two cases are settled by \Cref{ijik,ijij} below.

\begin{lemma} \label{ijik}
The gaussoid $\{ (ij|), (ik|M) \}$ on the ground set $ijkM$ is rationally
realizable near all hyperoctahedral images of the identity.
\end{lemma}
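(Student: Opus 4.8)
The plan is to write down an explicit perturbation of the prescribed sign‑diagonal and verify, via \Cref{RationalFunctions}, that its CI structure is exactly $\{(ij|),(ik|M)\}$. Fix one of the $2^n$ diagonal sign matrices $J=\mathrm{diag}(s_a)_{a\in ijkM}$ with $s_a\in\{\pm1\}$. I build a symmetric matrix $\Gamma$ over a rational function field $\BB Q(\eps_{ab})$ whose diagonal is $J$, whose off‑diagonal entries are independent infinitesimals $\eps_{ab}$, with the two exceptions $\Gamma_{ij}=0$ (to force the entry‑type statement $(ij|)$) and $\Gamma_{ik}:=\Gamma_{i,M}\Gamma_M^{-1}\Gamma_{M,k}$. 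The second choice is dictated by the Schur identity $\det\Gamma_{ik|M}=\det\Gamma_M\cdot(\Gamma_{ik}-\Gamma_{i,M}\Gamma_M^{-1}\Gamma_{M,k})$, which then vanishes identically, so $(ik|M)$ holds. Since $\Gamma_{i,M},\Gamma_{M,k}$ are built from infinitesimals, the constant term of $\Gamma_{ik}$ is zero and its denominator $\det\Gamma_M$ has constant term $\prod_{m\in M}s_m=\pm1$; hence $\Gamma^\circ=J$, which is principally regular, so principal regularity passes to $\Gamma$ and \Cref{RationalFunctions} applies. Note that the inferenceless hypothesis $(\dagger)$ forces $M\ne\emptyset$ here, which is exactly what makes $\Gamma_{ik}\not\equiv0$ as a rational function; were $M$ empty the pair would not even be a gaussoid.

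It remains --- and this is the crux --- to prove that no almost‑principal minor vanishes other than the two intended ones. I would organize this by a weight count: assign weight $1$ to every free infinitesimal, so diagonal entries have weight $0$, the entry $\Gamma_{ik}$ has weight $2$, and $\Gamma_{ij}=0$ is discarded. For a statement $(ab|K)$ whose pair $\{a,b\}$ is neither $ij$ nor $ik$, the Leibniz expansion of $\det\Gamma_{ab|K}$ has a unique term of minimal weight, the diagonal pairing $a\mapsto b$, $\kappa\mapsto\kappa$, contributing $\pm\Gamma_{ab}\prod_{\kappa\in K}s_\kappa$; every other permutation contributes weight at least two. This lowest‑weight monomial survives, so all such minors are nonzero. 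This disposes of the overwhelming majority of statements at once and leaves only the finitely many $(ij|K)$ and $(ik|K)$ to inspect.

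For the residual pairs I would extract a surviving monomial by hand. For $(ij|K)$ with $K\ne\emptyset$ the diagonal pairing is killed by $\Gamma_{ij}=0$; if $K$ meets $M$ the minimal‑weight contributions are the distinct monomials $\eps_{im}\eps_{mj}$ over $m\in K\cap M$, which cannot cancel, and the only leftover is $(ij|k)$, whose minor is $-\Gamma_{ik}\eps_{kj}\not\equiv0$ because $M\ne\emptyset$. For $(ik|K)$ I would compare the weight‑$2$ part of $\det\Gamma_{ik|K}$ against that of the target: for an index $m\in M\setminus K$ the monomial $\eps_{im}\eps_{mk}$ arises only from $\Gamma_{ik}$, with coefficient $\pm\prod_{\kappa\in K}s_\kappa\ne0$, settling every $K$ with $M\not\subseteq K$. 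The genuinely delicate case is $(ik|jM)$, where the weight‑$2$ part cancels exactly as for the target; here I would fall back on the Schur identity and simplify it to $\det\Gamma_{jM}\cdot\alpha^{-1}(\Gamma_{i,M}\Gamma_M^{-1}\Gamma_{M,j})(\eps_{jk}-\Gamma_{j,M}\Gamma_M^{-1}\Gamma_{M,k})$, whose lowest‑weight part is the nonzero weight‑$3$ family $s_j(\sum_m s_m\eps_{im}\eps_{mj})\eps_{jk}$, confirming $(ik|jM)\notin\CIS{\Gamma}$. Carrying out the construction uniformly for each sign diagonal $J$ yields realizations near all hyperoctahedral images of the identity, as required; the main obstacle throughout is the bookkeeping of these leading terms, with the $(ik|jM)$ cancellation being the one place where second‑order infinitesimal behaviour must be examined.
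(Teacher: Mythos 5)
Your proposal is correct and matches the paper's own proof in all essentials: you construct the identical matrix (arbitrary $\pm 1$ sign diagonal, $\Gamma_{ij}=0$, $\Gamma_{ik}=\Gamma_{i,M}\Gamma_M^{-1}\Gamma_{M,k}$ forced by the Schur identity, independent infinitesimals elsewhere) and verify the non-vanishing of every other almost-principal minor by exhibiting a surviving lowest-degree monomial, which is exactly the paper's technique combined with \Cref{RationalFunctions}. The differences are purely organizational --- your uniform weight count subsumes the paper's case-by-case Laplace/Schur expansions for the types $(il|kN)$, $(kl|iN)$, $(lm|ikN)$, and your exact factorization $\det\Gamma_{ik|jM}=\det\Gamma_{jM}\cdot\alpha^{-1}\,(\Gamma_{i,M}\Gamma_M^{-1}\Gamma_{M,j})(\eps_{jk}-\Gamma_{j,M}\Gamma_M^{-1}\Gamma_{M,k})$ replaces the paper's $\eta$-coefficient argument for $(ik|jN)$ --- up to an immaterial slip in the constant prefactor of your final weight-$3$ expression (it is $\prod_{m\in M}s_m$ rather than $s_j$, which changes nothing).
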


\begin{proof}
We introduce the following notation:
\[
  \Phi = \kbordermatrix{
    &   i   &   j   &   k   &   M     \\
  i & \pm 1 &     0 &   \xi & \bm u^T \\
  j &     0 & \pm 1 &  \eta & \bm v^T \\
  k &   \xi &  \eta & \pm 1 & \bm w^T \\
  M & \bm u & \bm v & \bm w & \Sigma
  },
\]
where $(ij|)$ is already fulfilled by imposing the zero entry. The statement
$(ik|M)$ is equivalent to the following relation, after a Schur complement:
\[
  \xi = \bm u^T \Sigma^{-1} \, \bm w.
\]
Thus we impose this relation on $\xi$. All other appearing symbols are
supposed to be generic, i.e., $\eta$ is a variable, the vectors have
independent variable entries $u_m$, $v_m$, $w_m$, for $m \in M$, and
$\Sigma$ is a generic symmetric matrix with $\pm 1$-diagonal and independent
$\eps_{mn}$ off-diagonals. The signs of the diagonal elements of $\Phi$
are arbitrary but fixed. $\Phi$~is a matrix over $\BB Q(\eta, u_m, v_m, w_m,
\eps_{mn})$ whose off-diagonal entries tend to zero with the infinitesimal
variables and thus it approaches any hyperoctahedral image of the identity
matrix. The only denominator appears in $\xi$ and is the principal minor
$\det \Sigma$ with constant term $\pm 1$, which is infinitesimally non-zero.

By construction, $(ij|)$ and $(ik|M)$ hold for $\Phi$. It is clear that the
only interesting almost-principal minors are those involving $\xi$. For any
$N \subsetneq M$, the statement $(ik|N)$ surely does not hold because it is
equivalent~to
\[
  \bm u^T \Sigma^{-1} \, \bm w = \bm u_N^T \Sigma_N^{-1} \, \bm w_N,
\]
where the variables in $\bm u, \bm w, \Sigma$ are all independent.
There are four remaining cases: $(ik|jN)$, $(il|kN)$, $(kl|iN)$ and
$(lm|ikN)$, for relevant choices of $l, m$ and $N \subseteq M$.

The almost-principal minor $(ik|jN)$ is rewritten using Schur complement
with respect to~$jN$~to
\begin{align*}
  \det \Phi_{ik|jN} &= \det \Phi_{jN} \left(\xi -
    \begin{pmatrix} 0 & \bm u_N^T \end{pmatrix}
    \Phi_{jN}^{-1}
    \begin{pmatrix} \eta \\ \bm w_N^T \end{pmatrix}
  \right),
\end{align*}
which vanishes if and only if the parenthesized factor vanishes as a
rational function. Numerator and denominator of $\xi$ do not involve
the variable $\eta$, so it suffices to show that there is a monomial
divisible by $\eta$ with non-zero coefficient in the ``bilinear term''
inside the parentheses. All terms involving $\eta$ are
\vskip -2em 
\begin{align*}
  \eta \sum_{n \in N} u_n \left(\Phi_{jN}^{-1}\right)_{jn}.
\end{align*}
Each of these summands has a unique variable $u_n$ which does not appear
in $\Phi_{jN}$. If $N \not= \emptyset$, this ensures that the $\eta$
terms do not cancel and that the almost-principal minor does not vanish.
In case $N = \emptyset$, it is sufficient to remark that $\Phi_{ik|} =
\xi \not= 0$ because $M \not= \emptyset$ due to the assumption of
inferenceless generators~\eqref{eq:Inf'less}.

For the case $(il|kN)$ first assume that $l \not= j$. Laplace expansion on
the first row of the almost principal minor gives a sum
\[
  \det \begin{pmatrix}
    u_l & \xi & \bm u_N^T \\
    w_l & \pm 1 & \bm w_N \\
    \Sigma_{N,l} & \bm w_N^T & \Sigma_N
  \end{pmatrix}
  = u_l \det \Phi_{kN} \mp \dots
\]
of which the omitted terms are not divisible by~$u_l$. Since the constant
term of $\det \Phi_{kN}$ is $\pm 1$, the monomial $u_l$ arises in the sum
and cannot be canceled by other terms.
If $l = j$, then $(ij|kN)$ is equivalent~to
\[
  0 = \begin{pmatrix} \xi & \bm u_N^T \end{pmatrix}
  \Phi_{kN}^{-1}
  \begin{pmatrix} \eta \\ \bm v_N^T \end{pmatrix}.
\]
Again we investigate the terms divisible by $\eta$:
\[
  \eta \left(
    \xi (\Phi_{kN}^{-1})_{kk}
    + \sum_{n \in N} u_n (\Phi_{kN}^{-1})_{kn}
  \right).
\]
Since $\xi \not= 0$ and $(\Phi_{kN}^{-1})_{kk}$ has constant term $\pm 1$,
we find the monomial $\eta u_m w_m$ for some $m \in M$ in the numerator
of this almost-principal minor.

The case $(kl|iN)$ for $l \not= j$ is completely analogous to the previous
$(il|kN)$ one. In fact, the involved matrices are identical up to exchanging
the places of the generic vectors $\bm u$ and $\bm w$, which already play
symmetric roles in the definition of $\xi$. The matrix for $(kj|iN)$ is
\[
  \begin{pmatrix}
    \eta & \xi & \bm w_N^T \\
    0 & \pm 1 & \bm u_N^T \\
    \bm v_N & \bm u_N & \Sigma_N
  \end{pmatrix}
\]
and again Laplace expansion can be used to see that $\eta$ survives as
a monomial of degree one.

The last case is $(lm|ikN)$. If $j \not\in lm$, the almost-principal minor
of
\[
  \begin{pmatrix}
    \eps_{lm} & u_l & w_l & \Sigma_{l,N} \\
    u_m & \pm 1 & \xi & \bm u_N^T \\
    w_m & \xi & \pm 1 & \bm w_N^T \\
    \Sigma_{N,m} & \bm u_N & \bm w_N & \Sigma_N
  \end{pmatrix}
\]
has a monomial $\eps_{lm}$ via Laplace expansion in the first row.
The numerators of other summands in this expansion are not divisible
by~$\eps_{lm}$, making it impossible to cancel this degree-1 monomial.
Otherwise, without loss of generality, $m = j$ and the matrix
\[
  \begin{pmatrix}
    v_l & u_l & w_l & \Sigma_{l,N} \\
    0 & \pm 1 & \xi & \bm u_N^T \\
    \eta & \xi & \pm 1 & \bm w_N^T \\
    \bm v_N & \bm u_N & \bm w_N & \Sigma_N
  \end{pmatrix}
\]
is susceptible to the same Laplace expansion proof yielding a monomial~$v_l$.
\end{proof}

\begin{lemma} \label{ijij}
The gaussoid $\{ (ij|), (ij|M) \}$ on the ground set $ijM$ is rationally
realizable near all hyperoctahedral images of the identity.
\end{lemma}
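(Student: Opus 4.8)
The plan is to mirror the strategy of \Cref{ijik}: write down a near-identity candidate matrix with the two prescribed statements built in, and then rule out every other almost-principal minor by exhibiting a surviving monomial. On the ground set $ijM$ I would take
\[
  \Phi = \kbordermatrix{
    &   i   &   j   &   M     \\
  i & \pm 1 &     0 & \bm u^T \\
  j &     0 & \pm 1 & \bm v^T \\
  M & \bm u & \bm v & \Sigma
  },
\]
where $\Sigma$ is a generic symmetric matrix with $\pm1$-diagonal and independent infinitesimal off-diagonals, and $\bm u, \bm v$ are the $M$-rows attached to $i$ and $j$. The entry $\Phi_{ij}=0$ realizes $(ij|)$, and a Schur complement with respect to the shared $\Sigma$-block shows that $(ij|M)$ is equivalent to the single bilinear relation $\bm u^T\Sigma^{-1}\bm v=0$. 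The decisive difference from \Cref{ijik} is that there the statement $(ik|M)$ could be enforced by \emph{solving} for the free entry $\xi=\Phi_{ik}$; here the entry $\Phi_{ij}$ is already pinned to $0$ by $(ij|)$, so the relation $\bm u^T\Sigma^{-1}\bm v=0$ must instead be imposed as a genuine constraint on the generic vectors $\bm u$ and $\bm v$.

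The main difficulty is to impose this constraint without creating a pole at the identity. Because the constraint is bilinear and both $\bm u,\bm v$ are infinitesimal, it vanishes to second order at the origin, so naively solving it for a single entry of $\bm u$ or $\bm v$ produces a denominator with vanishing constant term, which would spoil the evaluation $\Phi^\circ$. I would avoid this by keeping $\bm u$ fully generic and letting $\bm v$ range over the hyperplane orthogonal to $\bm c:=\Adj(\Sigma)\bm u$ through the pole-free antisymmetric parametrization $\bm v=\sum_{m<n}t_{mn}\,(c_n\bm e_m-c_m\bm e_n)$ with fresh infinitesimals $t_{mn}$; since $\bm c^T\bm v=0$ identically and $\det\Sigma\ne 0$, this forces $\bm u^T\Sigma^{-1}\bm v=0$ and hence $(ij|M)$. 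Note $|M|\ge 2$ by~\eqref{eq:Inf'less}, so this hyperplane is nontrivial. All entries of $\Phi$ are then polynomials in the infinitesimals $u_m$, $t_{mn}$, $\eps_{mn}$ over $\BB Q$, every off-diagonal entry has zero constant term, and the diagonal is $\pm1$; thus $\Phi^\circ$ is the prescribed hyperoctahedral image of the identity and the realization is rational and near-identity.

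It remains to check $\CIS{\Phi}=\{(ij|),(ij|M)\}$, which I would organize by the type of the tested statement $(xy|K)$ on $ijM$: the pair $\{x,y\}$ equals $\{i,j\}$, meets $M$ in exactly one element, or lies inside $M$, and $K$ is a subset of the remaining ground set. The statements not involving $\bm v$ are handled exactly as in \Cref{ijik} using the genericity of $\bm u$ and $\Sigma$. The delicate statements are those that see the constrained vector $\bm v$, above all $(ij|K)$ for $\emptyset\ne K\subsetneq M$, whose minor is, up to a nonzero principal factor, $\bm u_K^T\Sigma_K^{-1}\bm v_K$: I~expect this to be the main obstacle, since one must show that the \emph{global} relation $\bm u^T\Sigma^{-1}\bm v=0$ does not propagate to the analogous relation on any proper subset $K$. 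The antisymmetric parametrization is chosen precisely so that a monomial in the free parameters $t_{mn}$ survives in each such minor; a short Leibniz- or Laplace-expansion argument, as in \Cref{ijik}, then exhibits an uncancellable monomial and certifies non-vanishing in every remaining case.
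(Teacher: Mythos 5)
Your construction is correct, and it follows the paper's skeleton — the same bordered matrix $\Phi$ with $\Phi_{ij}=0$, the same recognition that $(ij|M)$ must be enforced as the bilinear constraint $\bm u^T\Adj(\Sigma)\,\bm v=0$ rather than solved for an entry, and the same monomial-survival style of verification — but it replaces the paper's device for enforcing that constraint by a genuinely different one. The paper uses a Gram--Schmidt-type construction, $\bm u=\bm x$ and $\bm v=\alpha_M\bm y-\beta_M\bm x$ with two generic vectors and $\alpha_M=\bm x^T\Adj(\Sigma)\bm x$, $\beta_M=\bm x^T\Adj(\Sigma)\bm y$; its case analysis then hunts for degree-three monomials such as $x_m^2x_ky_k$ and $x_m^2y_k$. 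You instead keep $\bm u$ generic and make $\bm v$ \emph{linear} in fresh infinitesimals $t_{mn}$ via the skew parametrization $\bm v=\sum_{m<n}t_{mn}(c_n\bm e_m-c_m\bm e_n)$ with $\bm c=\Adj(\Sigma)\bm u$, which is polynomial (no poles) and vanishes at the origin, so near-identity realizability is preserved. This linearity buys a more uniform verification: each delicate minor can be organized by its coefficients as a polynomial in the $t$'s. In the case you correctly single out as the main obstacle, $(ij|K)$ with $\emptyset\ne K\subsetneq M$, the minor is $\pm\,\bm u_K^T\Adj(\Sigma_K)\,\bm v_K$, and the coefficient of $t_{\{m,n\}}$ for $m\in K$, $n\in M\setminus K$ is $\pm(\Adj(\Sigma_K)\bm u_K)_m\,(\Adj(\Sigma)\bm u)_n$, a product of two non-zero polynomials — so the global relation indeed does not propagate to proper subsets. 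The remaining types $(jk|N)$ and $(kl|jN)$, which you only gesture at, do go through: either one isolates a $t_{\{k,n\}}$-coefficient in the same way, or one uses that all off-diagonal entries of your $\Phi$ have zero constant term while the $\bm v$-entries have degree at least two, so the degree-one monomials $u_k$ resp.\ $\eps_{kl}$ survive exactly as in the paper's argument and in \Cref{ijik}. The only cost of your route is the larger parameter count ($\binom{|M|}{2}$ infinitesimals $t_{mn}$ instead of the $|M|$ entries of a second generic vector), which is harmless in this framework; in exchange the case analysis becomes arguably more systematic than the paper's.
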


\begin{proof}
We use the matrix pattern
\[
  \Phi = \kbordermatrix{
     &   i   &   j   &   M     \\
   i & \pm 1 &     0 & \bm u^T \\
   j &     0 & \pm 1 & \bm v^T \\
   M & \bm u & \bm v & \Sigma
  }
\]
with column vectors $\bm u$ and $\bm v$ and a generic matrix $\Sigma$ with
$\pm 1$-diagonal and independent $\eps_{mn}$ off-diagonals. Again, $(ij|)$
is imposed already by a zero entry. Unlike the situation of \Cref{ijik}, we
cannot make $(ij|M)$ hold by imposing a relation on the $ij$-entry of $\Phi$
which is already set to~zero. The equation for $(ij|M)$ is equivalent~to
\[
  0 = \bm u^T \Adj(\Sigma) \, \bm v,
\]
that is, $\bm u$ and $\bm v$ are orthogonal with respect to the (infinitesimally
principally regular) adjoint of~$\Sigma$. Equivalently we could have used
$\Sigma^{-1}$ but prefer not to introduce denominators into $\Phi$ needlessly.
To~enforce this relation, we define $\bm u$ and $\bm v$ via the Gram-Schmidt
process on vectors $\bm x$ and $\bm y$ of mutually independent variables indexed
by~$M$, as follows:
\begin{align*}
  u_k &= x_k, \\
  v_k &= \alpha_M y_k - \beta_M x_k,
\end{align*}
with the bilinear forms $\alpha_L = \bm x_L^T \Adj(\Sigma_L) \bm x_L$ and
$\beta_L = \bm x_L^T \Adj(\Sigma_L) \bm y_L$ for any $L \subseteq M$. This
completes the definition of $\Phi$, which is a matrix over $\BB Q(x_m, y_m,
\eps_{mn})$ whose off-diagonal entries tend to zero with the infinitesimal
variables, and clearly $\CIS{\Phi}$ contains $(ij|)$ and $(ij|M)$.
Evidently $(kl|N) \not\in \CIS{\Phi}$ whenever $j \not\in klN$ because
the almost-principal submatrix is generic in this case. The remainder of
the proof treats CI statements of the forms $(ij|N)$, $(jk|N)$ and $(kl|jN)$
each for all suitable $k$, $l$ and~$N \subseteq M$.

If $N$ is any non-empty subset of $M$, the almost-principal minor $(ij|N)$
becomes
\begin{align*}
  \det \Phi_{ij|N} &= \bm u_N^T \Adj(\Sigma_N) \, \bm v_N \\
  &= \alpha_M \bm x_N^T \Adj(\Sigma_N) \, \bm y_N - \beta_M \bm x_N^T \Adj(\Sigma_N) \, \bm x_N \\
  &= \alpha_M \beta_N - \alpha_N \beta_M.
\end{align*}
If $N \not= M$, it suffices to find a monomial in $\alpha_M \beta_N$ which
does not appear in $\alpha_N \beta_M$. Given $k \in N$ and $m \in M \setminus N$,
and using that $x_m^2$ only appears in $\alpha_M$ via the constant term~$\pm 1$
in the cofactor $(\Adj \Sigma)_{mm}$, such a monomial is $x_m^2 x_k y_k$.

Next consider type $(jk|N)$ with
\[
  \det \Phi_{jk|N} = \det \Phi_N \, (\alpha_M y_k - \beta_M x_k)
  - \Phi_{j,N} \Adj(\Phi_N) \, \Phi_{N,k}.
\]
By the assumption of inferenceless generators~\eqref{eq:Inf'less}, $|M| \ge 2$,
so there exists $m \in M \setminus k$. The expansion of $\alpha_M y_k$ produces
the monomial $x_m^2 y_k$ which does not appear in $\beta_M x_k$. Thus this monomial
arises from the product term. The remaining term is a bilinear form with respect
to~$\Adj(\Phi_N)$. Expanding the $\Phi_{j,N}$ vector, pretending that $x_i = y_i = 0$
in case $N \ni i$, we find
\[
  \Phi_{j,N} \Adj(\Phi_N) \, \Phi_{N,k}
  = \alpha_M \bm y_N^T \Adj(\Phi_N) \, \Phi_{k,N} - \beta_M \bm x_N^T \Adj(\Phi_N) \, \Phi_{k,N}.
\]
Each monomial in $\alpha_M$ or $\beta_M$ has total degree at least~$2$;
$y_n$, $x_n$ and $\Phi_{kn}$ are variables or zero if $n = i \in N$.
Under no circumstance does any monomial of total degree $3$ arise. This
proves that $x_m^2 y_k$ is a monomial with non-zero coefficient in the
expansion of $\det \Phi_{jk|N}$, hence $(jk|N) \not\in \CIS{\Phi}$.

The last type $(kl|jN)$ splits into two cases, depending on whether $i \in kl$
or not. The proofs are similar, so suppose first that $i \not\in kl$.
Then
\[
  \det \Phi_{kl|jN} = \det \Phi_{jN} \eps_{kl}
  - \Phi_{k,jN} \Adj(\Phi_{jN}) \, \Phi_{jN,l}.
\]
Because $\det \Phi_{jN}$ has constant term $\pm 1$, the monomial $\eps_{kl}$
appears in the above expansion of the almost-principal minor. It suffices
to show that the bilinear form term does not produce this monomial. Indeed,
\[
  \Phi_{k,jN} \Adj(\Phi_{jN}) \, \Phi_{jN,l}
  = \sum_{a,b \in jN} \Phi_{ak} \Phi_{bl} (\Adj \Phi_{jN})_{ab}
\]
sums over products of three polynomials of which at most the entry of the
adjoint may have a non-zero constant term. Analogously to above, this sum
has no monomial of total degree~$1$, so the $\eps_{kl}$ monomial cannot be
canceled. Finally,~if $l = i$, the $\eps_{kl}$ term in the calculation
above becomes $x_k$ instead and one of the coordinates in the bilinear
form term is the $0$ in $\Phi_{ij|}$. However, this does not interfere
with the argument.
\end{proof}

\section{Remarks and examples}
\label{sec:Remarks}

We have shown that on every ground set $N$, a Boolean formula in inference
form $\varphi: \bigwedge \G L \Rightarrow \bigvee \G M$ in variables $\A_N$
and with $|\G L| \le 2$ is valid for all regular Gaussian distributions if
and only if the gaussoid axioms on~$N$ logically imply~$\varphi$. The main
work of the proof consists of realizing all gaussoids with two elements, as
their realizability implies that no inference forms with two antecedents,
beyond the deductive closure of the gaussoid axioms, are valid for all
regular Gaussians. Since we constructed positive-definite rational realizations
(in every neighborhood of the identity matrix), which is the most restrictive
constellation among algebraic and positive realizations over characteristic~zero,
we obtain \Cref{TwoAntecedental} which in character reaches slightly beyond the
probabilistic origin of gaussoids and into the field of synthetic geometry.

\Cref{TwoAntecedental} does not hold in general in positive characteristic.
For example, it is easy to see that the only principally regular matrix
over $\GF(2)$ is the identity matrix, so the $\GF(2)$-algebraic Gaussians
satisfy many inference rules which are not implied by the gaussoid axioms.
The proof strategy of this paper begins to fail over finite fields in
\Cref{sec:Epsilon}. For example, \Cref{EpsilonSum} does not hold over~$\GF(2)$.

The result does not generalize to more antecedents either: a valid
\emph{three}-antecedental inference rule for Gaussians which is not implied
by the gaussoid axioms was found by Lněnička and Matúš in~\cite[Lemma~10,~(20)]{LnenickaMatus}.
The offending gaussoid is $\{ (12|3), (13|4), (14|2) \}$ over $N = 1234$.
It contains the antecedents of the instance in dimension 4 of Studený's
infinite list of independent inference rules~\cite{StudenyNonfinite}.
This family appears in the same function in infinite forbidden minor
proofs for semimatroids~\cite[Proposition~4]{MatusMinors} and (semidefinite)
Gaussian CI structures~\cite[Theorem~3.2]{SimecekNonfinite}.

In private correspondence, Milan Studený kindly pointed out that \Cref{TwoAntecedental}
is not a complete analogue to~\cite{StudenyTwoAntecedental} because it
concerns only CI inference forms over \emph{elementary} or \emph{local}
CI~statements $(ij|K) \in \A_N$ where $i$ and $j$ are singletons.
By contrast, statements of the form $(I,J|K)$ with pairwise disjoint
sets $I,J,K \subseteq N$ are \emph{global} CI statements.
It is an easy exercise to prove that for compositional graphoids the
following equivalence holds:
\[
  (I,J|K) \;\Leftrightarrow\; \bigwedge_{i\in I, j\in J} (ij|K).
\]
Therefore, a general theory of gaussoids can forgo global symbols.
Since local and global semigraphoids are in bijection, a global
semigraphoid can be recovered exactly from its intersection with
$\A_N$~\cite[Section~2]{MatusMinors}. However, for inference rules with
a bound on the number of antecedents, there is a major difference in
allowing global statements. For example, the single global CI statement
$(12,34|5)$ corresponds to the local statements $\{(13|5), (14|5), (23|5), (24|5)\}$
which have a unique minimal gaussoid extension with 16~elements.
This gaussoid is realizable, hence $(12,34|5)$ is not the antecedent set
of a non-trivial valid global inference rule for Gaussians, but this
is not covered by our proof. Hence we have

\begin{conjecture}
\label{GlobalTwoAntecedental}
All minimal gaussoid extensions of at most two global CI statements are
realizable.
\end{conjecture}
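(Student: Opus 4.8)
The plan is to reduce the global problem to the local machinery of \Cref{sec:Proof}. First I would use the stated equivalence $(I,J|K) \Leftrightarrow \bigwedge_{i \in I, j \in J} (ij|K)$ to replace the two global antecedents $(I_1,J_1|K_1)$ and $(I_2,J_2|K_2)$ by the single local set $\G B = \{ (ij|K_1) : i \in I_1, j \in J_1 \} \cup \{ (ij|K_2) : i \in I_2, j \in J_2 \}$, and then prove that every minimal gaussoid extension of this $\G B$ is rationally realizable near the identity. The elementary case $|I_r| = |J_r| = 1$ is exactly \Cref{MainThm}, so the genuinely new content lies with non-elementary blocks. As in \Cref{sec:Proof}, \Cref{Embedding} lets me strip away indices outside $I_1 J_1 K_1 I_2 J_2 K_2$. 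Since the hyperoctahedral action is an inclusion-preserving bijection on gaussoids, it sends a minimal gaussoid extension of $\G B$ to the minimal gaussoid extension of its image, so via \Cref{MainTool} I may pass to a convenient orbit representative and, in particular, swap out $K_1$ to assume the first antecedent is the block $(I_1,J_1|\emptyset)$.

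Second, I would organize the proof by a case analysis on the combinatorial entanglement of $(I_1,J_1)$ and $(I_2,J_2,K_2)$, mirroring the table preceding \Cref{ijik}. In the separated cases, where the two blocks live on essentially disjoint ground sets, the union is a dependent sum and \Cref{EpsilonSum} reduces the task to realizing a single block $(I,J|K)$ near the identity. When $K = \emptyset$ this merely zeroes the $I \times J$ entries of an otherwise generic infinitesimal matrix; when $K \neq \emptyset$ I would use a \emph{block Gram--Schmidt} generalization of \Cref{ijij}, defining the $J$-columns $\bm v_j$ by projecting generic vectors $\bm x_j$ off the $I$-columns with respect to $\Adj(\Sigma_K)$ so that the entire conditional $I \times J$ block vanishes identically while all remaining entries stay generic. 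The surviving-monomial bookkeeping of \Cref{ijik,ijij}, applied entrywise, should then confirm that no almost-principal minor outside the intended extension vanishes.

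Third, the entangled cases --- where $I_1 J_1$ meets $I_2 J_2 K_2$, or where $K_2 \neq \emptyset$ couples the two blocks --- require a single combined parametrization carrying both orthogonality systems at once, imposing the two block relations simultaneously while keeping the shared coordinates constrained by both systems in a consistent, still generic way. \Cref{RationalFunctions} then converts any such realization over the infinitesimal function field into a rational positive-definite one arbitrarily close to the identity, and \Cref{MainTool} propagates it across the full hyperoctahedral orbit, so that realizing the finitely many representatives suffices.

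The hard part will be twofold. First, unlike the two-local-statement setting, the antecedent set $\G B$ now contains many statements, so its minimal gaussoid extensions need neither be unique nor small: the branching already visible in \Cref{WeakTrans} compounds, and one must enumerate the finitely many minimal extensions and exhibit a realization for \emph{each}, rather than for a single target. Second, forcing an entire conditional covariance \emph{block} to vanish couples many matrix entries through the orthogonality relations, so the monomial-survival arguments that certify genericity of the complement become considerably more delicate than in \Cref{ijik,ijij}. The real challenge is to choose the parametrization so that the imposed vanishing is exactly the prescribed block and nothing more, which is precisely what distinguishes a realization of the correct minimal extension from one that accidentally lands in a larger gaussoid.
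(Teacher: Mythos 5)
The statement you are trying to prove is posed in the paper as an open \emph{conjecture}: the paper has no proof of it, and in fact explains exactly why its own machinery falls short. Your proposal does not close that gap. The paper's proof of \Cref{MainThm} rests on a complete, very small classification of what can happen with at most two \emph{local} antecedents: the minimal gaussoid extensions are either singletons, four-element gaussoids inside a 3-face (when the pair is the antecedent set of a gaussoid axiom), or the inferenceless pairs themselves, organized into a finite table of entanglement types reduced to three orbit representatives. Once you localize global statements via $(I,J|K) \Leftrightarrow \bigwedge_{i\in I, j\in J}(ij|K)$, the antecedent set $\G B$ can have many elements, and no analogous classification of its minimal gaussoid extensions exists. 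The paper's own example makes this concrete: the \emph{single} global statement $(12,34|5)$ localizes to four statements whose unique minimal gaussoid extension already has 16 elements, far outside the case structure of \Cref{sec:Proof}. Your proposal acknowledges this enumeration problem as ``the hard part'' but offers no method for it; and for the genuinely entangled two-block cases, the ``single combined parametrization carrying both orthogonality systems at once'' is a restatement of what a proof would have to produce, not an argument. The monomial-survival bookkeeping in \Cref{ijik,ijij} is tailored to forcing one or two scalar relations; there is no demonstration that it survives when entire conditional blocks must vanish simultaneously and the closure may force statements mixing the two blocks.

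Two further points. First, your hyperoctahedral reduction is not as clean as stated: swapping out $K_1$ sends $(ij|K_2) \mapsto (ij|K_2 \oplus (K_1\setminus ij))$, so when $K_1$ meets $I_2J_2$ the images of the second block's local statements acquire \emph{different} conditioning sets; the image of a block-induced set is in general not block-induced, so a case table organized by block entanglement does not transport along the group action the way the paper's table of pairs does. Second, the paper itself points to a different potential route, namely Matúš's theorem that all minimal semigraphoid extensions of two global CI statements are multilinear over every field --- while simultaneously warning (\Cref{MultilinearNonalgebraic}) that multilinearity of a gaussoid does not imply algebraic or positive realizability, which is precisely why the conjecture remains open. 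Your outline, as it stands, reproduces the shape of the paper's two-antecedental argument without supplying the new ingredient (a structure theory for minimal gaussoid extensions of block-induced sets, or realizations thereof) that the conjecture actually demands.
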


In 2004, in an effort to classify semigraphoids by closures of families of
``easy semigraphoids'' under certain operations, Matúš~\cite{MatusClassification}
proved a significant refinement of Studený's two-antecedental completeness:
his Theorem~2 states that all minimal semigraphoid extensions of two global
CI~statements are multilinear over every field, which implies discrete
realizability and hence, by \Cref{ApproxLemma}, two-antecedental completeness
of the semigraphoid axioms for discrete CI and also for multilinearity over
every field.
Following~\cite{MatusMatroids}, a semigraphoid is a \emph{semimatroid} if
it is the CI structure of a polymatroid rank function. It is \emph{multilinear}
(depending on context this is also called just \emph{linear}) if the rank
function is given by a subspace arrangement, cf.~\cite[Section~6.1]{MatusMinors}.
Multilinearity over every field of all semigraphoids with at most two
generators is a strong result which may be helpful in approaching
\Cref{GlobalTwoAntecedental}. However, there is no general relation
between multilinear semimatroids which happen to be gaussoids and
algebraic or positive realizability, as the following example shows.

\begin{example}[A multilinear but non-complex gaussoid]
\label{MultilinearNonalgebraic}
To find a 5-gaussoid which is not algebraic over the complex numbers,
one iterates through the $\HypOct_5$-orbit representatives of 5-gaussoids
computed in~\cite{Geometry}.
\Cref{ImplicationAlgebraic} describes a complex realizability test for
gaussoids in geometric terms, which is readily translated into the
language of commutative algebra, to be executed by a computer algebra
system like \texttt{Macaulay2}~\cite{M2}. We refer to
\cite[Section~4.4]{CoxLittleOShea} for further clarification of the
machinery of computer algebra.
However, the exact test from \Cref{ImplicationAlgebraic} requires
computing the following quotient of the radical ideal defined by
the gaussoid-under-test $\G M$:
\[
  \sqrt{\left\langle \det \Gamma_{ij|K} : (ij|K) \in \G M\right\rangle} :
  \left(
    \prod_{K \subseteq N} \det \Gamma_K \cdot
    \prod_{(ij|K) \not\in \G M} \det \Gamma_{ij|K}
  \right),
\]
where $\Gamma$ is a generic symmetric $5 \times 5$ matrix over $\BB C$.
The product on the right is a very large polynomial and computing the
quotient is often infeasible due to the amount of memory and CPU~time
required.
Instead of taking the ideal quotient by a product of minors, one can
successively take the quotient by each factor. This produces a
potentially \emph{larger} variety than the realization space of $\G M$
described in \Cref{ImplicationAlgebraic}, but the computation is feasible
and if the larger variety turns out to be empty, the gaussoid is certain
to be non-realizable. For some gaussoids, this method proves
non-realizability within a few~seconds.

The~multilinearity test is based on the characterization of the cone
spanned by multilinear polymatroids on five variables achieved by
Dougherty-Freiling-Zeger in \cite{LinearRank}. We thank Kenneth Zeger
for pointing us to the new location of the data mentioned in their paper,
\url{http://code.ucsd.edu/zeger/linrank}. Using the list of extreme rays
of the multilinear polymatroid cone, one can compute their CI structures
and check for every incoming non-algebraic gaussoid from the preceding
test whether it is equal to the intersection of all extreme structures
above it.

One gaussoid passing both tests~is
\[
  \G M = \{ (14|23), (14|35), (15|2), (15|4), (23|145), (24|5), (25|13), (34|1), (35|) \}.
\]
To confirm that this gaussoid is non-realizable over $\BB C$, it suffices
to write down the ideal defined by the almost-principal minors in~$\G M$,
compute its radical and saturate it at the product of the non-vanishing
\emph{entries} of the matrix as specified by~$\G M$, which is a fast
operation. The variety defined by the resulting ideal is a superset of
the algebraic realization space of $\G M$ and one can check that the
almost-principal minor $(34|125) \not\in \G M$ vanishes on this variety.
This proves the nine-antecedental inference rule
\[
  \bigwedge \G M \;\Rightarrow\; (34|125) \vee
    (12|) \vee (13|) \vee (14|) \vee (15|) \vee
    (23|) \vee (24|) \vee (25|) \vee
    (34|) \vee
    (45|)
\]
for algebraic Gaussians over $\BB C$. Since $\G M$ does not satisfy this
rule, it is not realizable over~$\BB C$.
To~prove multilinearity, one has to exhibit the face of the multilinear
polymatroid cone defined by $\G M$ and verify that a relatively interior
point realizes~$\G M$. This face is spanned by the following 37 extreme
rays. Each ray is written as an array of 31~single-digit integers in
``binary counter'' order, i.e., the ranks are listed in the order of
$1, 2, 12, 3, 13, 23, 123, 4, \dots$, omitting the empty set at the
beginning whose value is always~$0$:

{\tiny
\begin{align*}
  0 0 0 1 1 0 0 1 1 0 1 1 1 1 1 0 1 1 1 1 1 1 1 1 1 1 1 1 1 1 1 &&
  1 0 1 0 0 1 1 1 1 1 0 0 1 1 0 1 1 1 1 1 1 1 1 0 1 1 1 1 1 1 1 &&
  1 1 0 0 0 1 1 1 1 1 1 1 0 0 0 1 1 1 1 1 1 1 1 1 0 1 1 1 1 1 1 \\[-.2em]
  1 0 1 1 0 1 1 1 1 1 1 0 1 1 1 1 1 1 1 1 1 1 1 1 1 1 1 1 1 1 1 &&
  1 1 1 0 0 1 1 1 1 1 1 1 1 1 0 1 1 1 1 1 1 1 1 1 1 1 1 1 1 1 1 &&
  1 1 0 1 1 1 1 1 1 1 1 1 1 1 1 1 1 1 1 1 1 1 1 1 1 1 1 1 1 1 1 \\[-.2em]
  0 0 1 1 1 0 1 1 1 1 1 1 2 2 2 1 1 1 2 2 2 2 2 2 2 2 2 2 2 2 2 &&
  1 0 1 0 1 1 2 1 2 1 0 1 1 2 1 2 1 2 2 2 2 1 2 1 2 2 2 2 2 2 2 &&
  1 1 1 0 1 2 2 1 2 2 1 1 1 2 1 2 2 2 2 2 2 2 2 1 2 2 2 2 2 2 2 \\[-.2em]
  1 1 1 0 1 1 2 1 2 2 1 2 1 2 1 2 1 2 2 2 2 2 2 2 2 2 2 2 2 2 2 &&
  1 1 1 1 0 2 1 2 1 2 2 1 2 1 1 2 2 2 2 1 2 2 2 2 2 2 2 2 2 2 2 &&
  1 2 1 0 1 2 2 1 2 2 2 2 1 2 1 2 2 2 2 2 2 2 2 2 2 2 2 2 2 2 2 \\[-.2em]
  1 1 1 1 1 2 2 1 2 2 2 1 2 2 2 2 2 2 2 2 2 2 2 2 2 2 2 2 2 2 2 &&
  1 1 0 1 1 2 1 2 2 1 2 2 1 1 2 2 3 3 2 2 3 2 2 3 2 3 3 3 3 3 3 &&
  1 1 1 1 1 2 1 2 2 2 2 2 2 2 2 2 3 3 2 2 3 3 3 3 3 3 3 3 3 3 3 \\[-.2em]
  1 2 1 1 1 3 2 2 2 3 3 2 2 2 2 3 3 3 3 2 3 3 3 3 3 3 3 3 3 3 3 &&
  1 1 1 1 1 2 2 2 2 2 2 2 2 2 2 3 3 3 3 2 3 2 3 3 3 3 3 3 3 3 3 &&
  1 1 1 1 1 2 2 2 2 2 2 2 2 2 2 2 3 3 3 3 3 3 3 3 2 3 3 3 3 3 3 \\[-.2em]
  1 1 2 1 1 2 2 2 2 2 2 2 3 3 2 2 3 3 3 3 3 3 3 3 3 3 3 3 3 3 3 &&
  2 1 2 1 1 2 3 2 3 3 2 2 3 3 2 3 2 3 3 3 3 3 3 3 3 3 3 3 3 3 3 &&
  1 1 1 1 2 2 2 2 3 2 2 3 2 3 3 3 3 4 3 3 4 3 4 4 4 4 4 4 4 4 4 \\[-.2em]
  1 2 1 1 2 3 2 2 3 3 3 3 2 3 3 3 4 4 3 3 4 4 4 4 4 4 4 4 4 4 4 &&
  1 1 2 2 1 2 2 3 2 3 3 2 4 3 3 3 4 3 4 3 4 4 4 4 4 4 4 4 4 4 4 &&
  1 1 2 1 2 2 3 2 3 2 2 3 3 4 3 3 3 4 4 4 4 3 4 4 4 4 4 4 4 4 4 \\[-.2em]
  1 1 2 1 2 2 3 2 3 3 2 3 3 4 3 3 3 4 4 4 4 4 4 4 4 4 4 4 4 4 4 &&
  2 2 2 1 1 3 3 3 3 4 3 3 3 3 2 4 4 4 4 3 4 4 4 4 4 4 4 4 4 4 4 &&
  1 2 2 1 2 3 3 2 3 3 3 3 3 4 3 3 4 4 4 4 4 4 4 4 4 4 4 4 4 4 4 \\[-.2em]
  1 2 2 2 2 3 3 3 3 4 4 3 4 4 4 4 5 4 5 4 5 5 5 5 5 5 5 5 5 5 5 &&
  2 1 3 2 2 3 4 3 4 4 3 3 5 5 4 4 4 5 5 5 5 5 5 5 5 5 5 5 5 5 5 &&
  2 3 2 1 2 4 4 3 4 5 4 4 3 4 3 5 5 5 5 4 5 5 5 5 5 5 5 5 5 5 5 \\[-.2em]
  3 2 3 2 1 4 4 4 4 5 4 3 5 4 3 5 5 5 5 4 5 5 5 5 5 5 5 5 5 5 5 &&
  2 2 2 2 2 4 4 4 4 4 4 4 4 4 4 5 5 6 6 5 6 5 6 6 5 6 6 6 6 6 6 &&
  2 2 3 2 2 4 4 4 4 5 4 4 5 5 4 5 6 6 6 5 6 6 6 6 6 6 6 6 6 6 6 \\[-.2em]
  2 1 3 2 3 3 5 3 5 4 3 4 5 6 5 5 4 6 6 6 6 5 6 6 6 6 6 6 6 6 6 &&
  2 2 3 2 3 4 5 4 5 5 4 5 5 6 5 6 5 7 7 6 7 6 7 7 7 7 7 7 7 7 7 &&
  2 3 3 2 3 5 5 4 5 6 5 5 5 6 5 6 7 7 7 6 7 7 7 7 7 7 7 7 7 7 7 \\[-.2em]
  3 2 4 3 2 5 5 5 5 6 5 4 7 6 5 6 7 7 7 6 7 7 7 7 7 7 7 7 7 7 7
\end{align*}
}
The fact that these vectors are multilinear and even extreme rays is the
content of~\cite{LinearRank}. It~is easy to verify that the CI structure
of the sum of the above vectors is exactly equal to $\G M$.
In their published data, \cite{LinearRank}~give integer matrices whose
rowspans represent the subspace arrangement. These matrices have the
additional property that whenever a concatenation of them has rank $r$
over $\BB Q$, then there exists an $r \times r$ submatrix whose determinant
equals~$\pm 1$, thus proving that the matrices realize the same polymatroid
over every field. By the general theory of multilinear semimatroids,
this shows that $\G M$ is multilinear over every field as~well.
The process and code snippets are more thoroughly documented at
\url{https://github.com/taboege/gaussant-code}.
\end{example}

\nocite{MatusII}
Putting together results by Matúš-Studený~\cite{MatusStudeny}--\cite{MatusFinal}
(corrections by Šimeček~\cite{SimecekShortnote}) and Lněnička-Matúš~\cite{LnenickaMatus},
one finds that on the set of gaussoids on a four-element ground set,
the realizability by discrete random variables and by positive-definite
matrices over $\BB Q$ or $\BB R$ are equivalent. In particular, a
ground set of cardinality five is required to find a multilinear but
non-algebraic gaussoid.

The notion of realizability near a matrix which is in the hyperoctahedral
orbit of the identity matrix was crucial to the proof of the main theorem
in \Cref{sec:Proof}. We showed that all gaussoids generated from at most
two CI statements are rationally realizable near the identity matrix.
In the following we present counterexamples to some obvious questions
about near-identity realizability.

\begin{example}[A non-near-identity realizable gaussoid]
Entry №~20~in~\cite[Table~1]{LnenickaMatus} contains the curve of matrices
\[
  \begin{psmallmatrix}
    1 & 2-\delta^{-2} & \delta & \delta \\
    2-\delta^{-2} & 1 & 0 & \delta \\
    \delta & 0 & 1 & \delta^2 \\
    \delta & \delta & \delta^2 & 1
  \end{psmallmatrix}
  \quad
  \xrightarrow\quad
  \quad
  \begin{psmallmatrix}
    1 & \frac{2}{9} & \frac{3}{4} & \frac{3}{4} \\
    \frac{2}{9} & 1 & 0 & \frac{3}{4} \\
    \frac{3}{4} & 0 & 1 & \frac{9}{16} \\
    \frac{3}{4} & \frac{3}{4} & \frac{9}{16} & 1 \\
  \end{psmallmatrix},
  \;
  \text{as $\delta \to \frac{3}{4}$}.
\]
The gaussoid $\G G = \{ (13|24), (23|), (34|1) \}$ realized by this matrix
is the algebraic Gaussian in our sense over $\BB Q(\eps)$ where
$\frac{3}{4} + \eps$ is substituted for~$\delta$.

Consider the slice of the positive realization space over $\BB R$ of this
gaussoid on the affine-linear space of symmetric matrices
\[
  \Sigma = \begin{psmallmatrix}
  1 & a & b & c \\
  a & 1 & 0 & e \\
  b & 0 & 1 & f \\
  c & e & f & 1
  \end{psmallmatrix}.
\]
This slice is the intersection of the algebraic realization space of the
gaussoid with the elliptope. The~identity matrix lies in the center of
the elliptope and we wish to show that the realization space of $\G G$,
although non-empty, does not approach this center. In particular $\G G$
is a gaussoid with \emph{three} elements, rationally positively realizable,
but not realizable near the identity. On the given slice, these two
equations hold:
\begin{align*}
f &= bc, \tag{$34|1$} \\
b + aef &= cf + be^2. \tag{$13|24$}
\end{align*}
Substituting the first into the second equality and canceling the non-zero
factor~$b$ in every term we find
\[
  1 + ace = c^2 + e^2.
\]
This equation cannot be satisfied if $a$, $c$ and $e$ all tend to zero.
It can be shown that the realization space of $\G G$ decomposes into eight
\emph{reorientation classes} which are identical up to an orthogonal
transformation; for an explanation of reorientation, see~\cite[Section~5]{Geometry}.
This transformation preserves Euclidean distances and it fixes the center
of the elliptope, thus all of these components have the same distance to
the identity matrix. Focusing on one of them, we can assume that $a$, $c$
and $e$ are all positive and then the Euclidean distance of a realization
of~$\G G$ to the identity is
\[
  \sqrt{2} \sqrt{a^2 + b^2 + c^2 + e^2 + f^2} = \sqrt{2} \sqrt{1 + ace + a^2 + b^2 + f^2} \ge \sqrt{2}.
\]
By allowing $e$ to converge to one while $a$, $b$ and $c$ converge to zero,
one can find positive-definite realizations of $\G G$ which approach this
lower bound. Thus, the elliptope slice of the realization space of $\G G$
has distance $\sqrt{2}$ to the identity matrix.
\end{example}

It was remarked in~\cite[Corollary~1]{Geometry} that, based on the hyperoctahedral
action and~\cite[Table~1]{LnenickaMatus}, every $4$-gaussoid is algebraically
realizable over~$\BB C$. Since all realizations in the table are even rational,
our~\Cref{RationalFunctions} and \Cref{HypOctMinors} furthermore imply that every
$4$-gaussoid is algebraically realizable over~$\BB Q$, while not all of them
are positively realizable even over~$\BB R$.

\begin{example}[Near-identity realizability not preserved under \unboldmath$\HypOct_N$]
\label{NearHypOct}
The bracketed self-dual gaussoid $\{ (12|), (12|34), (34|1), (34|2) \}$ in
item~$4_{12}$ in~\cite[p.~15]{Geometry} is not positively realizable over~$\BB R$.
However, in its hyperoctahedral orbit is the likewise self-dual
$\{ (12|3), (12|4), (34|1), (34|2) \}$ and this gaussoid is even realizable
rationally near the identity matrix --- it is №~30~in~\cite[Table~1]{LnenickaMatus}.
\end{example}

\begin{example}[A non-algebraic gaussoid] \label{Nonalgebraic}
In~\cite[Example~13]{Geometry} a $5$-gaussoid was found which is not algebraically
realizable over~$\BB C$. This gaussoid had a redundant element $(25|34)$ which
contributed neither to the property of being a gaussoid nor to being non-realizable.
In this example, we consider the gaussoid of \cite[Example~13]{Geometry},
with~$(25|34)$~removed, from the broader perspective of algebraic realizability over
general fields. The claim is that
\[
  \G V = \{ (12|),
    (13|4), (14|5), (23|5), (35|1), (45|2),
    (15|23), (34|12), 
    (24|135)
  \}
\]
is not algebraically realizable over any field. It is sufficient to check
this over algebraically closed fields. Over these fields, we can impose a
unit diagonal on a principally regular realization~$\Gamma$, by \Cref{RealizDiagonals}.
Then, $\G V$~imposes the following easy equations on~$\Gamma$:
\begin{equation*}
\begin{aligned}[c]
  \Gamma = \begin{psmallmatrix}
    1 & 0 & b & c & d \\
    0 & 1 & e & f & g \\
    b & e & 1 & h & i \\
    c & f & h & 1 & j \\
    d & g & i & j & 1
  \end{psmallmatrix},
\end{aligned}
\qquad
\begin{aligned}[c]
  e &= gi,
\end{aligned}
\qquad
\begin{aligned}[c]
  i &= bd, \\
  b &= ch,
\end{aligned}
\qquad
\begin{aligned}[c]
  c &= dj, \\
  j &= fg.
\end{aligned}
\end{equation*}
Using these variable substitutions, the longer equations, corresponding to
CI statements with bigger conditioning sets, shrink. They are
\begin{align*}
  0 &= d[1 - d^2 f^2 g^2 h^2 (1 + d^2 g^2 - g^2)], \tag{$15|23$} \\[-.2em]
  0 &= h[1 - 2 d^2 f^2 g^2], \tag{$34|12$} \\[-.2em]
  0 &= f[(1 - d^2) (1 + d^2 f^2 g^4 h^2 (1 + d^2) - g^2 (1 + d^2 h^2 (1 + f^2)))] \tag{$24|135$}.
\end{align*}
Dividing by the non-zero variables $d$, $f$, and $h$ yields a system in
even powers of the variables. Replacing $d^2 = D$ and so on, we have:
\begin{align*}
  1 &= DFGH(1 + DG - G), \tag{a} \label{eq:a} \\[-.2em]
  1 &= 2DFG, \tag{b} \label{eq:b} \\[-.2em]
  0 &= (1 - D) (1 + DFG^2H + D^2FG^2H - G - DGH - DFGH) \tag{c} \label{eq:c}.
\end{align*}
In a principally regular $\Gamma$ we have $1 - D = \det \Gamma_{15} \not= 0$,
so the last equation is equivalent to
\begin{align*}
  0 = 1 + DFG^2H + D^2FG^2H - G - DGH - DFGH. \tag{c$'$} \label{eq:c'}
\end{align*}
By adding up \eqref{eq:a} and \eqref{eq:c'} and then using \eqref{eq:b}:
\begin{align*}
  0 &= 1 + DFG^2H + D^2FG^2H - G - DGH - DFGH + DFGH + D^2FG^2H - DFG^2H - 1 \\[-.2em]
    &= 2D^2FG^2H - DGH - G \\[-.2em]
    &= -G,
\end{align*}
which is a contradiction to $(25|) \not\in \G V$. Notice that this
contradiction was derived using only the non-vanishing of principal
minors and division by variables known to be non-zero by the definition
of~$\G V$. The~argument is independent of the field chosen, so the
following nine-antecedental inference rule is valid for algebraic
Gaussians over every field:
\[
  \label{VamosInf} \tag{$\ddagger$}
  \bigwedge \G V \;\Rightarrow\; (15|) \vee (24|) \vee (25|) \vee (34|).
\]
This proves that~$\G V$ is not algebraically realizable over any field.

We note that, unlike $\G M$ of \Cref{MultilinearNonalgebraic}, this CI~structure
is not realizable by discrete random variables either because it is not a
semimatroid. The closure of $\G V$ in the set of $5$-semimatroids and in the
set of multilinear $5$-semimatroids can be computed using polyhedral geometry,
as outlined in~\Cref{MultilinearNonalgebraic}, because the extreme rays of
the relevant cones are known.
The semimatroid closure of $\G V$ contains 16 CI~statements but none of the
conclusions $(15|)$, $(24|)$, $(25|)$ and $(34|)$ of~\eqref{VamosInf}, so
this inference rule is not valid for semimatroids. The multilinear closure
is larger at 22 elements and from the possible conclusions of \eqref{VamosInf},
it contains precisely $(15|)$. Hence, the special case $\bigwedge \G V
\Rightarrow (15|)$ of \eqref{VamosInf} is valid for multilinear semimatroids.
The closure of~$\G V$ among the CI~structures realizable by discrete random
variables lies in between the semimatroid and multilinear closures.
It remains open whether $\bigwedge \G V \Rightarrow (15|)$ is valid
even in the discrete case.
\end{example}

As a final direction for future work we would like to mention the influence
of the field $\BB K$ on the realizability of gaussoids. There are numerous
questions to be asked inspired by related theorems in matroid theory,
cf.~\cite[Chapter~6]{Oxley}. To give one example: recall from \Cref{ImplicationAlgebraic}
that the realizability of a gaussoid $\G G$ is a disproof of the existence
of a non-trivial valid inference rule with antecedent set~$\G G$.
Therefore, realizing matrices represent invalidity proofs for inference
rules in a very compact way.
In~the proof of \Cref{MainThm}, we exhibited \emph{rational} matrices as
invalidity proofs for all two-antecedental inference formulas which are
not implied by the gaussoid axioms. These matrices can be stored on a
computer and verified using exact arbitrary-precision rational arithmetic.
This leads to the following two variants of a question which was asked
before by Petr Šimeček~\cite[Section~4]{SimecekGaussian} in the broader
context of semidefinite Gaussian CI~models. He asked whether there exists
a realizable Gaussian CI~structure for which a \emph{rational} realizing
covariance matrix does not exist --- because his computer search for
invalid inference rules was conducted on rational matrices only.
Both questions are answered affirmatively in~\cite{Gaussruler}.

\begin{question}
\label{PositiveRational}
Is there a gaussoid which is positively realizable over $\BB R$ but not
over~$\BB Q$?
\end{question}

\begin{question}
\label{AlgebraicDiff}
Are there gaussoids which separate the notions of algebraic realizability
over $\BB C$, $\BB R$ and~$\BB Q$?
\end{question}

\paragraph{\bfseries Acknowledgement}
The author wishes to thank Thomas Kahle, Andreas Kretschmer and Milan Studený
for helpful discussions, and the anonymous referees for their clear and
thoughtful suggestions for improvement of the earlier manuscript.
Thanks to Xiangying Chen for spotting an error in the previous version of
\Cref{Nonalgebraic}. This work is funded by the Deutsche Forschungsgemeinschaft
(DFG, German Research Foundation) -- 314838170, GRK 2297 MathCoRe.

\bibliographystyle{alpha}
\bibliography{gaussant}

\end{document}